\newtheorem{thm}{Theorem}[section]
\newtheorem{cor}[thm]{Corollary}
\newtheorem{lem}[thm]{Lemma}
\newtheorem{cl}[thm]{Claim}
\newtheorem{lemm}[thm]{Lemma}
\theoremstyle{definition}
\newtheorem{dfn}[thm]{Definition}
\newtheorem*{dfn*}{Definition}
\newtheorem{nota}[thm]{Notation}
\theoremstyle{remark}
\newtheorem{rmk}[thm]{Remark}
\newcommand{\tf}{\mathscr{M}^{\op{tf}}(v)}
\newcommand{\sss}{\mathscr{M}^{\op{ss}}(v)}
\newcommand{\msss}{\mathscr{M}^{\mu\text{ss}}(v)}
\newcommand{\hn}{\mathscr{M}_{(v_1,v_2)}^{\op{HN}}(v)}
\newcommand{\la}{\langle}
\newcommand{\ra}{\rangle}
\newcommand{\op}{\operatorname}
\begin{document}

\title[Moduli stacks of torsion free sheaves and Brill-Noether theory]
{Classifying the irreducible components of moduli stacks of torsion free sheaves on K3 surfaces and an application to Brill-Noether theory}
\author{Yuki Mizuno}

\email{m7d5932a72xxgxo@fuji.waseda.jp}
\address{Department~of~Mathematics, School~of~Science~and~Engineering, Waseda~University, Ohkubo~3-4-1, Shinjuku, Tokyo~169-8555, Japan}

\begin{abstract}
In this article, we classify the irreducible components of moduli stacks of torsion free sheaves of rank 2 on K3 surfaces of Picard number 1. For ruled surfaces, the components of moduli stacks of torsion free sheaves were classified by Walter (\cite{walter1995components}). Moreover, by virtue of our result, we classify the irreducible components of Brill-Noether loci of Hilbert schemes of points on K3 surfaces.
\end{abstract}

\keywords{Moduli spaces of sheaves , Algebraic stacks , K3 surfaces , Brill-Noether theory}
\subjclass[2010]{14D20, 14D23, 14J28}

\maketitle

\section{Introduction}
 Moduli spaces of sheaves is one of the most central areas of algebraic geometry. By considering them, many interesting objects have been found. On K3 surfaces, moduli spaces of sheaves can have symplectic structures, which was first observed by Mukai (\cite{mukai1984symplectic}). On the other hand, as is well-known, we can construct such moduli spaces by restricting objects to coherent sheaves satisfying stability. However, the moduli spaces do not parametrize unstable sheaves.
 In this point, stack is important and useful tool to construct moduli spaces which is difficult to construct in the framework of scheme. 
 
 Our original motivation of the present paper is studying symplecticity of moduli spaces of sheaves on K3 surfaces. 
 Moreover, in \cite{mukai1984moduli} and \cite{yoshioka1999irreducibility} and others, it was shown that non-emptiness, irreducibility and other properties of moduli schemes depend essentially on Mukai vector. In \cite{kurihara2008holomorphic} and \cite{yoshioka2003twisted}, properties of the moduli stacks of semistable sheaves on K3 surfaces are studied. Although we can study moduli spaces of unstable sheaves on K3 surfaces by using stack theory, detailed observations are less than studies of moduli schemes.
  
  Various types of stratifications of stacks are studied by G\'omez, Sols and Zamora \cite{gomez2015git} and Hoskins \cite{hoskins2018stratifications} and others. However, it seems that irreducible decomposition of moduli stacks of sheaves is not treated in these papers. In the present article, we first classify the irreducible components of moduli stacks of torsion free sheaves of rank 2 on K3 surfaces of Picard number $\rho = 1$. 
  Classifying the irreducible components of moduli stacks of torsion free sheaves on ruled surfaces 
  is discussed in \cite{walter1995components}.
  However, we need new ideas to solve our problem because K3 surfaces have trivial canonical sheaves and may not be fibered surfaces. Important results and methods in this paper are studies of moduli stacks of semistable sheaves and filtered sheaves by Yoshioka (\cite{kimura2011}, \cite{kurihara2008holomorphic}, \cite{yoshioka2003twisted}, \cite{yoshioka2009fourier}), the classical theory by Shatz (\cite{shatz1977decomposition}) and generalized Shatz's theory by Nitsure (\cite{nitsure2011schematic}). By using these theories, we obtain our first result. More precisely, we first take stratification of moduli stacks of torsion free sheaves by moduli stacks of semistable sheaves and ones of Harder-Narasimhan filtrations. After that, we analyze the strata and describe the irreducible components by using the above theory of Yoshioka.

  If $\tf$ and $\sss$ denote respectively the moduli stacks of torsion free sheaves and semistable sheaves with Mukai vector $v$ (in detail, see Definition \ref{def:mv}), our first result is the following. 
  
 \begin{thm}\label{thm:mainthm1}
Let
 $X$ be a \text{K3 surface} of $\rho(X)=1$ over $\mathbb{C}$, let $v_0$ be a primitive Mukai vector and, let $v =([v]_0, [v]_1, [v]_2):=mv_0 $ $(m \in \mathbb{Z})$. 
We assume $[v]_0=2$. 
Then, we have the irreducible decomposition of $\tf$ as follows.
	\begin{equation*}
		 \tf = \begin{cases}
\overline{\sss} \cup \bigcup_{(v_1, v_2) \leq 1 } \overline{\mathscr{M}_{(v_1,v_2)}^{\text{HN}}(v)} & \text{if }\la v_0, v_0 \ra \geq -2\\
\bigcup \overline{\mathscr{M}_{(v_1,v_2)}^{\text{HN}}(v)}& otherwise
\end{cases}
	\end{equation*}	
 , where 
 the stack $\hn$ is defined as
	\[\hn :=
	\left\{ E \in  \tf \mathrel{} \middle| \mathrel{} \begin{gathered} ^\exists(0 \subset E_1 \subset E) :\text{Harder-Narasimhan filtration}\\
	\text{such that } v(E_1)=v_1,v(E/E_1)=v_2 \end{gathered}
	\right\}.
	\]
\end{thm}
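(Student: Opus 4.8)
The plan is to build the stratification of $\tf$ explicitly and then identify which strata contribute irreducible components. First I would observe that every torsion free sheaf $E$ of rank $2$ is either $\mu$-semistable or has a Harder--Narasimhan filtration $0 \subset E_1 \subset E$ with $E_1$ a line bundle (tensored with an ideal sheaf) of slope strictly larger than that of $E/E_1$; since $\rho(X)=1$ the Mukai vectors $v_1 = v(E_1)$ and $v_2 = v(E/E_1)$ are determined by an integer parameter, and the HN inequality $\mu(v_1) > \mu(v_2)$ cuts this down to the discrete set indexed in the statement. This gives a set-theoretic decomposition
\[
\tf = \sss \,\sqcup\, \bigsqcup_{(v_1,v_2)} \hn,
\]
where the union is over admissible pairs with $\mu(v_1)>\mu(v_2)$. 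I would invoke Nitsure's schematic (generalized Shatz) stratification to make this a stratification by locally closed substacks, with the closure of a stratum contained in the union of strata of ``less balanced'' type.

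Second, I would analyze each stratum. For $\sss$: by Yoshioka's results on moduli stacks of semistable sheaves on K3 surfaces (the cited work of Yoshioka and Kurihara), $\sss$ is nonempty and irreducible exactly when $\la v_0, v_0\ra \geq -2$, which explains the case split in the theorem; when it is nonempty its dimension is $\la v,v\ra + 1$ computed via the Mukai pairing. For each HN stratum, there is a morphism $\hn \to \msssa \times \msssb$ sending $E$ to $(E_1, E/E_1)$, whose fiber over a point is (an open in) the stack of extensions, i.e. governed by $\operatorname{Ext}^1(E/E_1, E_1)$; since $v_1$ has rank $1$ the factor $\msssa$ is a Hilbert scheme of points times a Picard component and hence irreducible, and $\msssb$ is likewise rank $1$ and irreducible, so by Yoshioka's machinery each $\hn$ is irreducible of computable dimension. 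Thus every stratum is irreducible, and $\tf$ is the union of the closures of these finitely many irreducible locally closed substacks.

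Third, I would prune: a stratum's closure is an irreducible component iff it is not contained in the closure of another stratum. Using the dimension formulas and the incidence relations among strata coming from Nitsure's theory, I would show that the closure of a sufficiently unbalanced HN stratum is never swallowed (generic such sheaves cannot be deformed to anything more balanced or to a semistable sheaf, for numerical reasons involving the jump in $\operatorname{Ext}^1$), while the ``balanced'' HN strata with $(v_1,v_2)\le 1$ — and $\overline{\sss}$ when it is present — are exactly the maximal ones. This comparison is where the work concentrates: one needs, for each pair of admissible types, either a deformation (semicontinuity / elementary modification) argument showing containment, or a dimension count plus a tangent-space obstruction argument showing non-containment. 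I expect the main obstacle to be precisely this containment analysis — making the bound ``$(v_1,v_2)\le 1$'' sharp by exhibiting explicit degenerations for the unstable-but-not-too-unstable pairs while proving rigidity for the rest — since it requires controlling $\operatorname{Ext}$-groups of non-semistable rank $2$ sheaves uniformly over the strata rather than just at generic points.
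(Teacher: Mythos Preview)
Your overall architecture matches the paper's: stratify $\tf$ by $\sss$ and the $\hn$, establish irreducibility of each stratum via Yoshioka's results, then determine which closures are maximal. Where your plan diverges is in step three, and here the paper's arguments are both sharper and simpler than what you sketch.

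First, for non-containment between distinct HN strata the paper does \emph{not} argue case by case with Ext-obstructions. It proves a single lemma: for any two distinct pairs $(v_1,v_2)\neq(v_1',v_2')$ one has $\overline{\hn}\not\subseteq\overline{\mathscr{M}^{\text{HN}}_{(v_1',v_2')}(v)}$. The mechanism is Shatz/Nitsure semicontinuity (a specialization can only raise the HN polygon, hence only increase $m$ where $[v_1]_1=mH$) combined with the explicit formula $\dim\hn=H^2(m-n/2)^2+\text{const}$, which is monotone in $|m-n/2|$; a purported containment lifted to an atlas forces a strict dimension inequality in the wrong direction. This disposes of all HN-versus-HN comparisons at once.

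Second, and this is the step you flag as the main obstacle, to show $\overline{\hn}\subset\overline{\sss}$ when $\la v_1,v_2\ra>1$ the paper does \emph{not} construct degenerations or elementary modifications. It invokes the pointwise lower bound $\dim_x\tf\geq\la v,v\ra+1$ (cited from Kimura--Yoshioka): since $\dim\hn=\la v,v\ra+2-\la v_1,v_2\ra<\la v,v\ra+1$ in this range, $\overline{\hn}$ cannot itself be an irreducible component, and by the previous lemma it lies in no other HN closure, so it must lie in $\overline{\sss}$. Your proposed ``explicit degeneration'' route would work in principle but is much harder to carry out uniformly; the dimension lower bound short-circuits it.

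One terminological slip: the strata with $\la v_1,v_2\ra\leq 1$ are the \emph{more unbalanced} ones (larger $|m-n/2|$), not the ``balanced'' ones; it is precisely the nearly-balanced strata that get absorbed into $\overline{\sss}$.
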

We call $\hn$ the moduli stack of Harder-Narasimhan filtrations with type $(v_1, v_2)$. (in detail, see Definition \ref{dfn:mshn})

\begin{rmk}
	Note that $\sss \neq \emptyset$ if and only if $\la v_0, v_0 \ra \geq -2$ (\cite[Corollary 0.3]{yoshioka1999irreducibility}).
	And, we can compute the dimensions of $\tf$ at each point by using Theorem \ref{thm:mainthm1} and Lemma \ref{lemm:dimension formula}.
\end{rmk}

The second purpose of this paper is classifying the irreducible components of Brill-Noether loci of Hilbert schemes of points on K3 surfaces by using the first result. Originally, in \cite{walter1995components}, components of Brill-Noether loci of Hilbert schemes of points on ruled surfaces were classified. In \cite{walter1995components}, Castelnuovo-Mumford regularity and the Bertini's theorem were mainly used. However, we need more detailed analysis to achieve the application for K3 surfaces.
Namely, we focus on the method of the proof of the Bertini theorem (\cite{badescu2012projective}) and more recent results about K3 surfaces (\cite{kurihara2008holomorphic}, \cite{yoshioka1999irreducibility}). 
Our second result is the following.
\begin{thm}\label{thm:mainthm2}
	Let $X$ be a K3 surface of $\rho(X) = 1$ over $\mathbb{C}$, let $v:=(2, nH, \frac{n^2}{2}H^2-N+2)$$ = mv_0 $ $(v_0: \text{primitive}$ $ \text{Mukai vector}$, $m \in  \mathbb{Z})$
	and let $nH$ be an effective divisor on $X$ $(n \in \mathbb{Z}_{\geq 0}, H : \text{the }\text{generator }\text{of }$$\op{Pic(X)})$. We assume $N \leq h^0(\mathscr{O}(nH))$. 
Then, we classify the irreducible components of
	\[ W^0_N(nH) = \{ [Z] \in \op{Hilb}^N(X) \mid h^1(\mathscr{I}_Z(nH)) \geq 1 \} \]
into one of the following.

$(\alpha): $ for all $(v_1, v_2)$, if 
 $\la v_1, v_2 \ra \leq 1$, $[v_1]_1, [v_2]_1 \neq 0 : \text{effective}$ and $-1 < [v_2]_2$, there exists a unique irreducible component of $ W^0_N(nH)$ such that, for a general member $Z$, the torsion free sheaf $E$ fitting into the extension 
\[ 0 \rightarrow \mathscr{O}_X \rightarrow E \rightarrow \mathscr{I}_Z(nH) \rightarrow 0\]
is contained in $\mathscr{M}_{(v_1, v_2)}^{\text{HN}}(v)$.

$(\beta):$ if $\la v_0, v_0 \ra \geq -2$ except for the case `` $H^2= 2$ and $v = (2, 3H, 5)$ '' , 
there exists a unique irreducible component of $ W^0_N(nH)$ such that for a general member $Z$, the torsion free sheaf $E$ fitting into the extension
\[ 0 \rightarrow \mathscr{O}_X \rightarrow E \rightarrow \mathscr{I}_Z(nH) \rightarrow 0\]
is contained in $\mathscr{M}^{\text{ss}}(v)$.

 \end{thm}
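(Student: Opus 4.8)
\emph{The plan is the following.}

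\emph{Step 1 (Brill--Noether correspondence).} Since $X$ has trivial canonical bundle, Serre duality gives
\[ h^1(\mathscr{I}_Z(nH)) \;=\; \dim\op{Ext}^1\bigl(\mathscr{I}_Z(nH),\mathscr{O}_X\bigr), \]
so $[Z]\in W^0_N(nH)$ if and only if there is a non-split extension
\[ 0 \to \mathscr{O}_X \to E \to \mathscr{I}_Z(nH) \to 0 , \]
and any such $E$ is a torsion free rank-$2$ sheaf with Mukai vector $v$, i.e.\ a point of $\tf$. Conversely, if $E\in\tf$ admits a section $\mathscr{O}_X\to E$ with torsion free cokernel, that cokernel is $\mathscr{I}_Z(nH)$ for a unique $[Z]\in\op{Hilb}^N(X)$, and $[Z]\in W^0_N(nH)$ as soon as the associated extension class is non-zero. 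Hence $W^0_N(nH)$ is precisely the set of zero-schemes of saturated sections of the sheaves parametrised by $\tf$ that yield non-split extensions.

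\emph{Step 2 (reduction to Theorem \ref{thm:mainthm1}).} I would encode Step 1 in an incidence stack $\mathscr{P}$ of pairs $(E,s)$, with $E\in\tf$ and $s\colon\mathscr{O}_X\to E$ a section with torsion free cokernel and non-split extension class, together with the two projections $p\colon\mathscr{P}\to\tf$, $(E,s)\mapsto E$, and $q\colon\mathscr{P}\to\op{Hilb}^N(X)$, $(E,s)\mapsto Z$. Then $q$ surjects onto $W^0_N(nH)$, its fibre over $[Z]$ is $\mathbb{P}\op{Ext}^1(\mathscr{I}_Z(nH),\mathscr{O}_X)$, and the fibre of $p$ over $E$ is an open subscheme of $\mathbb{P}H^0(E)$, in particular irreducible. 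A diagram chase using that $\tf$ has finitely many irreducible components then shows that every irreducible component $W$ of $W^0_N(nH)$ satisfies $W\subseteq\overline{q(p^{-1}(\mathscr{C}))}$ for a single irreducible component $\mathscr{C}$ of $\tf$; by Theorem \ref{thm:mainthm1}, $\mathscr{C}$ is $\overline{\sss}$ (only possible when $\la v_0,v_0\ra\ge -2$) or $\overline{\hn}$ for one of the Harder--Narasimhan types appearing there. The problem thus becomes: for each such $\mathscr{C}$, decide whether $\overline{q(p^{-1}(\mathscr{C}))}$ is non-empty and is maximal in $W^0_N(nH)$, show that distinct $\mathscr{C}$ produce distinct such loci, and match the outcome with conditions $(\alpha)$ and $(\beta)$.

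\emph{Step 3 (analysis of the strata).} Fix $\mathscr{C}=\overline{\hn}$ of type $(v_1,v_2)$ and write a general member as a non-split extension $0\to E_1\to E\to E_2\to 0$ with $E_i$ the generic rank-one sheaf of Mukai vector $v_i$ and $\mu(E_1)>\mu(E_2)$. For $n\ge1$ one has $\mu(E_1)>0$, so a section $\mathscr{O}_X\to E$ with torsion free cokernel cannot factor through $E_1$: it must project non-trivially to $E_2$ and keep $\mathscr{O}_X$ saturated in $E$. Tracing this through, the existence of such a section for a general such $E$ forces $[v_2]_1$ (equivalently, given the Harder--Narasimhan order, also $[v_1]_1$) to be non-zero and effective, and forces $-1<[v_2]_2$, which is exactly the inequality guaranteeing $h^0(\mathscr{I}_{Z_2}([v_2]_1))\ge 1$ for a general length-$\ell$ subscheme $Z_2$; the bound $\la v_1,v_2\ra\le1$ is what makes the stratum appear at all. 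For $\mathscr{C}=\overline{\sss}$ one instead invokes the non-emptiness criterion $\la v_0,v_0\ra\ge -2$ and positivity of sections of a general stable sheaf. In both cases the key genericity statement --- that a general section of a general $E$ vanishes in codimension two and that the resulting zero-schemes sweep out a full-dimensional family in $\op{Hilb}^N(X)$ --- is obtained by adapting the proof of the Bertini theorem in \cite{badescu2012projective}: one studies $|H^0(E)|$, controls its base locus, and shows the incidence of vanishing points has the expected dimension. Finally I would compute $\dim\overline{q(p^{-1}(\mathscr{C}))}$ from $\dim\mathscr{P}|_{\mathscr{C}}=\dim\mathscr{C}+\bigl(h^0(E)-1\bigr)$ and the generic $q$-fibre dimension $h^1(E)+1$ (valid once $H^2(E)=0=H^2(\mathscr{I}_Z(nH))$ is verified, which holds under the above conditions), with Lemma \ref{lemm:dimension formula} supplying $\dim\hn$, and compare these across the various $\mathscr{C}$. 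This identifies $(\alpha)$ with the surviving Harder--Narasimhan strata and $(\beta)$ with the semistable stratum; the exceptional case ``$H^2=2$, $v=(2,3H,5)$'' is exactly the one where $\la v,v\ra=-2$, so $\overline{\sss}$ is a single rigid stable sheaf and $\overline{q(p^{-1}(\overline{\sss}))}$ turns out to be contained in one of the type-$(\alpha)$ loci, yielding no new component.

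\emph{Main obstacle.} The hard step is the last one: proving that each candidate $\overline{q(p^{-1}(\mathscr{C}))}$ is a genuine irreducible component --- not absorbed into the closure of another --- and that distinct data give distinct components. This needs tight control of the generic fibres of both $p$ and $q$ over every stratum of $\tf$, in particular of the jumps of $h^0(E)$ and $h^1(E)$, of the sub-generic loci inside each stratum, and of the borderline case $(2,3H,5)$. Feeding into it, the Bertini-type input is itself delicate: for strictly semistable or unstable $E$ the linear system $|H^0(E)|$ may acquire a base divisor, and ruling this out in the correct numerical range is precisely what produces the effectivity hypotheses on $[v_1]_1$ and $[v_2]_1$ in $(\alpha)$.
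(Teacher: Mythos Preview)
Your overall strategy---encode the Brill--Noether locus via an incidence correspondence of pairs $(E,s)$, project to $\tf$, and invoke Theorem \ref{thm:mainthm1}---is exactly the paper's. The paper, however, packages the ``which strata contribute'' question more cleanly than your proposed dimension comparison: it observes that the irreducible components of $W^0_N(nH)$ are in bijection with the irreducible components $\mathscr{C}$ of $\tf$ whose general member $E$ satisfies (1) $H^1(X,E)=H^2(X,E)=0$ and (2) some $s\in H^0(X,E)$ has $E/s\mathscr{O}_X$ torsion free. Condition (1) forces $h^1(\mathscr{I}_Z(nH))=1$ for the resulting $Z$, so the image lands in $W^0_N\setminus W^1_N$ and is automatically a component; no cross-stratum dimension comparison is needed, and the dimension formula reads simply $\dim V=\dim\mathscr{C}+h^0(E)$. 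The paper then proves Lemma \ref{lem:notsemistable} (HN strata) and Lemma \ref{lem:semistable} ($\sss$), translating (1) and (2) into the numerical conditions of the theorem. Condition (1) is proved by induction on $\ell_1$, and condition (2) via the inequality $h^0(E(-k))+\dim|kH|<h^0(E)$ (Claim \ref{lem:claim}); this inequality is the precise Bertini-type input you allude to. Note incidentally that your generic $q$-fibre dimension is $h^1(E)$, not $h^1(E)+1$.

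Your explanation of the exceptional case $H^2=2$, $v=(2,3H,5)$ is incorrect. It is true that $\la v,v\ra=-2$ and $\sss$ is a single rigid sheaf $E$, but rigidity alone does not prevent $E$ from contributing a component: a rigid $E$ with $H^1(E)=0$ and a good section still yields a (low-dimensional) component of $W^0_N$. The actual obstruction is that condition (1) fails: the paper shows (Claim \ref{163847_3Dec21}) that every such $E$ sits in $0\to\mathscr{O}_X(1)\to E\to\mathscr{I}_{Z'}(2)\to 0$ with $Z'\in W^0_2(H)=W^0_2(2H)$, so $h^1(\mathscr{I}_{Z'}(2))\ge1$ and hence $h^1(E)\ge1$. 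The image of the semistable stratum therefore lies inside $W^1_N(nH)$ and cannot be a component of $W^0_N(nH)$. That it is then absorbed into an $(\alpha)$-locus is a consequence, not the mechanism.
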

 
 \begin{rmk}
  If $N > h^0(\mathscr{O}(nH))$, then $W^0_N(nH) = \op{Hilb}^N(X)$.
 And by using Theorem \ref{thm:mainthm2}, we see not only whether   $W^0_N(nH)$ is empty or not but also the dimensions and the number of the irreducible components of $W^0_N(nH)$.
 \end{rmk}
\begin{rmk}
 About what happens in the exceptional case `` $H^2 = 2$ and $v = (2 , 3H , 5 )$ '' in Theorem \ref{thm:mainthm2}, see Claim \ref{163847_3Dec21} and a few paragraphs after that.
\end{rmk}

\section{Preliminaries}	

In this paper, the word \textit{a surface} means a two-dimensional algebraic variety over $\mathbb{C}$. The word \textit{an algebraic stack} means an Artin stack over $\mathbb{C}$. In addition, the word \textit{open (resp. closed, resp. locally closed) substack} means a strictly substack whose inclusion map is an open (resp. closed, resp. locally closed) immersion (in detail, see \cite{laumon2000moret} or \cite{stacks-project}). 

	\subsection{Mukai vectors}
	\begin{dfn}[Mukai vectors \cite{huybrechts2010geometry}]\label{def:mv}
		Let $X$ be a K3 surface and let $E$ be a coherent sheaf on $X$.Then the Mukai vector $v(E)$ of $E$ is $ ($rank($E$), $c_1(E), \frac{c_1(E)^2}{2}-c_2(E)+$rank$(E)) $ $\in \mathbb{Z}$ $\oplus$ Pic($X$) $\oplus$ $\mathbb{Z}$.
	\end{dfn}
		\begin{dfn}[Mukai paring \cite{huybrechts2010geometry}]
		Let $X$ be a K3 surface and let $ v:=([v]_0,[v]_1,[v]$ $_2)$, $v':=([v']_0,[v']_1,[v']_2)$ $\in $ $\mathbb{Z} \oplus \op{Pic}(X) \oplus \mathbb{Z}$.
		Then, we define the Mukai pairing of $v$ and $v'$ to be  $\la v,v'\ra$$:=-[v]_0[v']_2+[v]_1[v']_1-[v]_2[v']_0$ $\in$$\mathbb{Z}$.	
		\end{dfn}

\begin{dfn}[(\cite{huybrechts2010geometry})]For any $v \in \mathbb{Z} \oplus \op{Pic}(X) \oplus \mathbb{Z}$, $v$ is primitive if “$v' \in \mathbb{Z} \oplus \op{Pic}(X) \oplus \mathbb{Z}$, $m \in \mathbb{Z}$, $v = mv'$
		 $\Rightarrow$ $m =1 \text{ or } -1$”
\end{dfn}

\subsection{Moduli stacks}
\begin{dfn}[Moduli stacks of torsion free sheaves]
\label{mstf}
Let $X$ be a K3 surface over $\mathbb{C}$, and let
$v $ $\in \mathbb{Z} \oplus \op{NS}(X) \oplus \mathbb{Z}$.
we define the moduli stack $\tf$ of torsion free sheaves with Mukai vector $v$ on $X$ to be the following category 
\begin{enumerate}

\item Objects$:(S$, $E)$, where
 $S :$ scheme over $\mathbb{C}$, $E:$ quasi-coherent locally of finite presentation sheaves over $X\times_{\mathbb{C}}S(=:Z)$ and flat over $S$, and $E_t: $ torsion-free sheaf over $Z_t = X_{k(t)}$ such that $v(E) = v$, $(\forall t \in S)$;

\item Morphisms $:$ morphisms from $(S$, $E)$ to $(S'$, $E')$ are the pairings $(\varphi:S \rightarrow S',\alpha:{\varphi}^*E \rightarrow E')$ such that $\alpha$ is an isomorphism. 
\end{enumerate}
\end{dfn}
\begin{rmk}
$\mathscr{M}^{tf}(v)$ is an algebraic stack. And, we can define moduli stacks  $\mathscr{M}(v)$ of coherent sheaves with Mukai vector $v$ on $X$ in the same way.
\end{rmk}

\begin{dfn}[Points of algebraic stacks \cite{laumon2000moret}, \cite{stacks-project}]Let $\mathscr{X}$ be an algebraic stack. Then, 
\[ 
|\mathscr{X}|
 := \coprod_{K/\mathbb{C}: \text{extension of fields} }\mathscr{X}(\op{
 Spec}(K))/ \sim,
	 \]
where if let $E \in \mathscr{X}(\op{Spec}(K))$, let $
	  E' \in \mathscr{X}(\op{Spec}(K'))$ and let $
	  K,K'$ be extensions of $\mathbb{C}$, we write $E \sim E' $ if there exists a extension $K''$ of  $K,K'$ such that $E \mid_{X_{\op{Spec}(K'')}} \simeq E' \mid_{X_{\op{Spec}(K'')}} $.
\end{dfn}

\begin{dfn}[Topological spaces of algebraic stacks \cite{laumon2000moret}, \cite{stacks-project}]
Let $\mathscr{X}$ be an algebraic stack. Then the set $\{U \subseteq  |\mathscr{X}| \mid \exists \mathscr{U}$: open substack of $\mathscr{X}$ such that $|\mathscr{U}| = U \}$ satisfies the axiom of open sets of $\mathscr{X}$. We think of $|\mathscr{X}|$ as a topological space by applying the definition.
\end{dfn}

\begin{dfn}[Relative dimensions \cite{laumon2000moret},\cite{stacks-project}]
	Let $P:U \rightarrow \mathscr{X} $ be a morphism from a scheme, and we assume $u \in U $ maps to $x \in |\mathscr{X}|$.
	Then, we define $\op{dim}_u(P)$ as follows.
	In the commutative diagram
	 \[
    \xymatrix{
	     U\times_{\mathscr{X}}\op{Spec}(k) \ar[r] \ar[d] & \op{Spec}(k) \ar[d]^x \\
     U \ar[r]^P &  \mathscr{X}\ar@{}[lu]|{\Box},}
	\] 
	\[
	\op{dim}_u(P) := \op{dim}_x( U\times_{\mathscr{X}}\op{Spec}(k)).
	\]
	\end{dfn}

\begin{dfn}[Dimensions of algebraic stacks at points \cite{laumon2000moret}, \cite{stacks-project}]
Let $\mathscr{X}$ be an algebraic stack, let $x \in \mathscr{X}(\op{Spec}(K)) $ where $K / \mathbb{C}$ is an extension and let $P:U \rightarrow \mathscr{X} $ be a smooth morphism from a scheme.  We assume $u \in U $ maps to $x \in |\mathscr{X}|$. Then
\[
\op{dim}_{x}(\mathscr{X}) := \op{dim}_u(U)-\op{dim}_u(P).
\]	
\end{dfn}

\begin{rmk}
If there is no confusion, we do not distinguish $\mathscr{X}$ with $|\mathscr{X}|$. And, Irreducible decomposition of $\mathscr{X}$ means irreducible decomposition of $|\mathscr{X}|$.
\end{rmk}

\subsection{Harder-Narasimhan filtrations and polygons}
\begin{thm}[Harder-Narasimhan(HN) filtration \cite{huybrechts2010geometry}]
		Let $X$ be a projective surface over $\mathbb{C}$, let $H$ be an ample divisor on $X$ and let $E$ be a torsion free sheaf on $X$. Then, for $E$ and $H$, there exists a unique filtration 
		\[
		0 = E_0 \subset E_1 \subset \cdots \subset E_{s-1} \subset E_s =E 
		\]
	  such that $E_i/E_{i-1}$ is $\mu$-semistable for $H$ $(i=1,\cdots s)$ and 
\[
 \mu(E_1/E_0)  > \mu(E_2/E_1) > \cdots > \mu(E_{s-1}/E_{s-2}) > \mu(E_{s}/E_{s-1}).
\]
 It is called  Harder-Narasimhan(HN) filtration of $E$ for $\mu$-stability.
		
		In the same way, we have Harder-Narasimhan filtration of $E$ for stability.			\end{thm}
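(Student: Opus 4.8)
The plan is to construct the filtration recursively by peeling off the unique \emph{maximal destabilizing subsheaf}, and to deduce uniqueness from the uniqueness of that subsheaf at each stage. Write $\mu(F) := (c_1(F)\cdot H)/\op{rk}(F)$ for every nonzero torsion free subsheaf $F \subseteq E$, and set $\mu_{\max}(E) := \sup\{\mu(F) \mid 0 \neq F \subseteq E\}$; the whole argument rests on showing that this supremum is a maximum and that the subsheaves attaining it have a largest element.

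First I would show $\mu_{\max}(E) < \infty$ and that it is attained. Passing from $F$ to its saturation $\bar F$ in $E$ only increases the slope, since $\op{rk}(\bar F) = \op{rk}(F)$ while $c_1(\bar F)\cdot H \geq c_1(F)\cdot H$ (the quotient $\bar F/F$ is supported in codimension $\geq 1$); hence it suffices to control saturated subsheaves. The possible slopes lie in the discrete set $\tfrac{1}{\op{lcm}(1,\dots,\op{rk}(E))}\mathbb{Z}$, because $c_1(F)\cdot H \in \mathbb{Z}$ and $\op{rk}(F) \in \{1,\dots,\op{rk}(E)\}$; so once the slopes are bounded above, the supremum is attained. \textbf{Proving this upper bound is the main obstacle}: it is the content of Grothendieck's boundedness lemma, which guarantees that the saturated subsheaves of $E$ of bounded rank form a bounded family, so that their degrees $c_1(F)\cdot H$ cannot grow without bound. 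Everything after this point is formal.

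Next I would produce the maximal destabilizing subsheaf. The family of subsheaves with $\mu(F) = \mu_{\max}(E)$ is stable under sums: for two such $F, G$, the sequence $0 \to F \cap G \to F \oplus G \to F + G \to 0$ gives additivity of rank and of $c_1(\cdot)\cdot H$, so from $\mu(F\cap G) \leq \mu_{\max}(E)$ one gets $c_1(F+G)\cdot H \geq \mu_{\max}(E)\,\op{rk}(F+G)$, forcing $\mu(F+G) = \mu_{\max}(E)$. Choosing $F$ of maximal rank among these produces a subsheaf $E_1$ containing all the others. This $E_1$ is saturated (its saturation has equal rank and slope, hence equals it), so $E/E_1$ is torsion free; it is $\mu$-semistable because every subsheaf has slope $\leq \mu_{\max}(E) = \mu(E_1)$; and $\mu_{\max}(E/E_1) < \mu(E_1)$, since the preimage in $E$ of a subsheaf of $E/E_1$ of slope $\geq \mu(E_1)$ would be a subsheaf of slope $\geq \mu_{\max}(E)$ and rank $> \op{rk}(E_1)$, contradicting maximality.

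Finally I would induct on $\op{rk}(E)$. Applying the construction to $E/E_1$ gives an HN filtration of $E/E_1$ with $\mu$-semistable quotients of strictly decreasing slopes, all $< \mu(E_1)$; taking preimages under $E \twoheadrightarrow E/E_1$ yields the desired filtration of $E$, the top slope inequality being exactly $\mu(E_1) > \mu_{\max}(E/E_1)$. For uniqueness, given any filtration with $\mu$-semistable quotients of strictly decreasing slopes, its first nonzero term $F_1$ is $\mu$-semistable with $\mu(F_1) = \mu_{\max}(E)$, so $F_1 \subseteq E_1$ by maximality of $E_1$; conversely $E_1 \to E/F_1$ vanishes because $E_1$ is $\mu$-semistable of slope $\mu_{\max}(E)$ while every HN quotient of $E/F_1$ has slope $< \mu_{\max}(E)$ (standard vanishing of $\op{Hom}$ between semistable sheaves of decreasing slope), so $E_1 \subseteq F_1$ and hence $E_1 = F_1$; induction on $E/E_1$ finishes the argument. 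The filtration for Gieseker stability is obtained verbatim after replacing $\mu$ by the reduced Hilbert polynomial and the ordering $>$ by the lexicographic ordering of polynomials, since both the discreteness and the sum–intersection estimate persist in that setting.
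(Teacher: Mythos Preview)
The paper does not prove this statement: it is quoted as a background result with a citation to \cite{huybrechts2010geometry}, so there is no proof in the paper to compare against. Your argument is exactly the standard one found in that reference (and in Huybrechts--Lehn), and the overall architecture --- bound the slopes of subsheaves via Grothendieck's lemma, extract the maximal destabilizing subsheaf, induct on rank, and deduce uniqueness from uniqueness of that subsheaf --- is correct.

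There is, however, one genuine slip in your second step. You write that ``choosing $F$ of maximal rank among [the subsheaves of slope $\mu_{\max}$] produces a subsheaf $E_1$ containing all the others'' and then that ``$E_1$ is saturated (its saturation has equal rank and slope, hence equals it)''. Neither inference is valid as stated: on a surface, a subsheaf and its saturation can share rank and slope yet differ by a sheaf supported in codimension~$2$. For instance, inside $E=\mathscr{O}_X^{\oplus 2}$ the subsheaf $\mathscr{I}_p\oplus\mathscr{O}_X$ has $\mu=0=\mu_{\max}(E)$ and maximal rank~$2$, but it is not saturated and does not contain $\mathscr{O}_X^{\oplus 2}$. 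The fix is to reverse the order of the two sentences: \emph{first} replace any max-rank max-slope $E_1$ by its saturation $\bar E_1$ (still of slope $\mu_{\max}$ since $\mu(\bar E_1)\ge\mu(E_1)=\mu_{\max}$, and of the same rank), and \emph{then} run your sum argument. For any $G$ with $\mu(G)=\mu_{\max}$ one gets $\op{rk}(\bar E_1+G)=\op{rk}(\bar E_1)$, so $(\bar E_1+G)/\bar E_1$ is a torsion subsheaf of the torsion-free sheaf $E/\bar E_1$, hence zero, giving $G\subseteq \bar E_1$. With this reordering the rest of your proof goes through unchanged. (A small related point in the uniqueness paragraph: the equality $\mu(F_1)=\mu_{\max}(E)$ deserves one line of justification, e.g.\ by filtering any $G\subset E$ by the $G\cap F_i$ and using that each nonzero graded piece injects into $F_i/F_{i-1}$.)
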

	
	\begin{dfn}[Harder-Narasimhan polygon{\cite{nitsure2011schematic}},{\cite{shatz1977decomposition}}]
		 Let $X$ be a projective surface over $\mathbb{C}$, let $H$ be an ample divisor on $X$ and let $E$ be a torsion free sheaf on $X$. We assume that $E$ has the HN filtration for $\mu$-stability	
		 \[
		0 = E_0 \subset E_1 \subset \cdots \subset E_{s-1} \subset E_s =E.
		\]
		Then, we define the Harder-Narasimhan(HN) polygon $\op{HNP}(E)$ of $E$ to be the polygon whose vertexes are $(0, 0)$, $(\op{rk}(E_1), \op{deg}_H(E_1)), (\op{rk}(E_2), \op{deg}_H(E_2))$, $\cdots$, $(\op{rk}(E_{s-1})$, $\op{deg}_H(E_{s-1}))$, $(\op{rk}(E)$, $\op{deg}_H(E))$. 
\end{dfn}
\begin{rmk}
We can also define the HN polygon of $E$ for stability. We use the notions of HN-polygon for both stability and $\mu$-stability. (in detail, see \cite{nitsure2011schematic})
\end{rmk}

\section{Irreducible decomposition of $\tf$}

\begin{nota}
In this and next section, $X$ always means a K3 surface of $\rho(X)=1$ and $H$ means the ample generator of $\op{Pic}(X)$.
   We denote the open substack of semi stable sheaves and of $\mu$-semi stable sheaves of $\tf$ by $\sss$ and $\msss$.
If $\overline{\{p\}} \ni p'$, then we write $p \rightsquigarrow p'$, where $p, p'$ denote points of a topological space and say that $p$ specializes $p'$.
\end{nota}
\begin{dfn} \label{dfn:mshn}  
we define $\mathscr{M}_{(v_1,v_2)}^{\text{HN}}(v)$ to be a substack of $\tf$ whose objects and morphisms are defined as follows.
	
Objects: $E$ $\in \tf$ such that  $E$'s HN-filtration is  $ 0 \subset E_1 \subset E$ with $v(E_1)= (1,d_1H,a_1)$, $v(E/E_1)= (1, d_2H, a_2),$ where $v_i:=(1, d_iH, a_i) \in \mathbb{Z} \oplus \op{Pic}(X) \oplus \mathbb{Z}, (i=1,2)$ ;

Morphisms: $\alpha:E \rightarrow E'$: an isomorphism preserving their HN-filtrations.
\end{dfn}

\begin{nota}
Let $v$ be an element of $ \mathbb{Z} \bigoplus \op{Pic}(Z) \bigoplus \mathbb{Z}$. We define 
\begin{align*}
Quot_X(F, v) &:= \{F \twoheadrightarrow E \mid E:\text{coherent on }X, v(E)=v \} ,\\
 R^{N,m}(v) &:= \left\{\varphi : \mathscr{O}_X(-m)^{\oplus N} \twoheadrightarrow E \in Quot_X(\mathscr{O}_X(-m)^{\oplus N}, v)  \mathrel{} \middle| \mathrel{} \begin{gathered} H^0(\varphi(m)):\text{isomorphism} \\  H^i(E(m)) = 0 (i > 0)
 \end{gathered}
	\right\}, \\
    R_{\text{tf}}^{N,m} 
     &:= R^{N,m} \times_{\mathscr{M}(v)} \tf , \\
  R_{\text{ss}}^{N,m} &:= R^{N,m} \times_{\mathscr{M}(v)} \sss \simeq  R_{\text{tf}}^{N,m} \times_\tf \sss , \\
    R_{(v_1, v_2)}^{N,m} 
    &:= R^{N,m} \times_{\mathscr{M}(v)} \mathscr{M}_{(v_1, v_2)}^{\text{HN}}(v) \simeq R^{N,m} \times_{\tf} \mathscr{M}_{(v_1, v_2)}^{\text{HN}}(v).
     \end{align*}

\end{nota}

\begin{rmk}
 $[ R_{\text{ss}}^{N,m}/\op{GL}(N)] \rightarrow \sss$ and $[R_{(v_1, v_2)}^{N,m}/\op{GL}(N)] \rightarrow \hn$ are open immersions because $[R^{N,m}/\op{GL}(N)] \rightarrow \mathscr{M}(v)$ is an open immersion (\cite[Proposition 9.6]{joyce2012theory}). 
In addition, we have $\op{dim}[ R_{\text{ss}}^{N,m}/\op{GL}(N)] = \op{dim}R_{\text{ss}}^{N,m} - \op{dim}\op{GL}(N)$ and $\op{dim}[R_{(v_1, v_2)}^{N,m}/\op{GL}(N)] = \op{dim}R_{(v_1, v_2)}^{N,m} - \op{dim}\op{GL}(N)$.
\label{rmk:quotient stack}
\end{rmk}

\subsection{Irreducibility of moduli stacks of sheaves and known results}
In this subsection, we refer to irreducibility of moduli stacks of HN-filtrations and known results needed to prove the our results.

\begin{lemm}[{\cite[Theorem 1.2]{kimura2011}}]
\label{lemm:irr,yoshioka}Let $X$ be a K3 surface of Picard number 1. If $\la v,v \ra > 0$, then $\sss$ is an irreducible algebraic stack.
		\qed
\end{lemm}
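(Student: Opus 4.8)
The plan is to reduce the irreducibility of the stack $\sss$ to the irreducibility of a parameter scheme, and then to invoke Yoshioka's non-emptiness and irreducibility theorem for moduli \emph{schemes}. Using the quotient presentation recalled in Remark \ref{rmk:quotient stack}, for $m \gg 0$ and $N$ the relevant Euler characteristic one has $\sss \simeq [R_{\op{ss}}^{N,m}/\op{GL}(N)]$ with $R_{\op{ss}}^{N,m} \to \sss$ a smooth surjective atlas; since the continuous image of an irreducible topological space is irreducible, it suffices to prove that the scheme $R_{\op{ss}}^{N,m}$ is irreducible.

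Next I would single out the open subscheme $R_{\op{s}}^{N,m} \subseteq R_{\op{ss}}^{N,m}$ lying over the stable locus, together with the open subscheme $M_H^{\op{s}}(v) \subseteq M_H^{\op{ss}}(v)$ of stable sheaves inside the moduli \emph{scheme} of semistable sheaves of Mukai vector $v$. On this locus the GIT quotient restricts to a geometric quotient $R_{\op{s}}^{N,m} \to M_H^{\op{s}}(v)$, which is a principal $\op{PGL}(N)$-bundle since a stable sheaf has only scalar automorphisms; in particular it is smooth and surjective with irreducible fibres of constant dimension. As $\la v,v \ra > 0$, the scheme $M_H^{\op{ss}}(v)$ is irreducible of dimension $\la v,v \ra + 2$ by Yoshioka (\cite{yoshioka1999irreducibility}), and stable (indeed $\mu$-stable) sheaves with Mukai vector $v$ exist, so $M_H^{\op{s}}(v)$ is a non-empty open subscheme of an irreducible scheme and hence irreducible; therefore $R_{\op{s}}^{N,m}$ is irreducible.

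It then remains to show that $R_{\op{s}}^{N,m}$ is dense in $R_{\op{ss}}^{N,m}$, so that $R_{\op{ss}}^{N,m} = \overline{R_{\op{s}}^{N,m}}$ is irreducible and $\sss$ is irreducible; this is the step I expect to be the main obstacle. Write $v = m_0 v_0$ with $v_0$ primitive, so that $\la v,v \ra > 0$ forces $\la v_0, v_0 \ra > 0$. A strictly semistable sheaf $E$ with $v(E) = v$ fits into an exact sequence $0 \to E' \to E \to E'' \to 0$ with $E', E''$ semistable of the same reduced Hilbert polynomial and $v(E') = a v_0$, $v(E'') = (m_0 - a) v_0$ for some $0 < a < m_0$. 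I would stratify the complement $R_{\op{ss}}^{N,m} \setminus R_{\op{s}}^{N,m}$ by such Jordan--Hölder data, refined by the dimensions of the relevant $\op{Hom}$ and $\op{Ext}^1$ groups, and estimate the dimension of each stratum in terms of $\dim M_H^{\op{ss}}(a v_0)$, $\dim M_H^{\op{ss}}((m_0 - a) v_0)$, $\dim \op{GL}(N)$ and $\dim \op{Ext}^1(E'', E')$, the last of these being computed by Riemann--Roch and Serre duality on $X$ (using $K_X = 0$). The aim is to conclude that $\dim\bigl(R_{\op{ss}}^{N,m} \setminus R_{\op{s}}^{N,m}\bigr) < \dim R_{\op{s}}^{N,m} = \la v,v \ra + 2 + \dim \op{PGL}(N)$, the gap being controlled by a term of the shape $a(m_0 - a)\la v_0, v_0 \ra \geq \la v_0, v_0 \ra > 0$. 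The delicate point is that the GIT quotient identifies $S$-equivalent sheaves, so the fibre of $R_{\op{ss}}^{N,m} \to M_H^{\op{ss}}(v)$ over a strictly semistable point is a union of several $\op{PGL}(N)$-orbits rather than one orbit; the stratification must therefore be arranged so that the smaller moduli schemes $M_H^{\op{ss}}(a v_0)$ and $M_H^{\op{ss}}((m_0-a)v_0)$ enter via an induction on $m_0$ that carries along their dimension and irreducibility. Needless to say, the real geometric input here --- Yoshioka's theorem, itself obtained by Fourier--Mukai reductions to Hilbert schemes of points --- is being used as a black box.
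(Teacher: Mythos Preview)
The paper does not give a proof of this lemma: the statement is followed by an immediate \qed and is attributed wholesale to \cite[Theorem~1.2]{kimura2011}. There is therefore no in-paper argument to compare your proposal against; what you have written is a reconstruction of the proof of the cited result, and your overall strategy (pass to the Quot atlas, show the stable locus is irreducible, then show it is dense in the semistable locus by a dimension count on Jordan--H\"older strata) is indeed the standard one.

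Two points need tightening. First, in your second paragraph you invoke irreducibility of the moduli \emph{scheme} $M_H^{\op{ss}}(v)$ from \cite{yoshioka1999irreducibility}, but that paper treats primitive Mukai vectors. When $v = m_0 v_0$ with $m_0 \ge 2$, irreducibility of $M_H^{\op{ss}}(v)$ is not available from that reference and is in fact part of what \cite{kimura2011} establishes; citing it here risks circularity. The clean fix is to run your induction on $m_0$ entirely at the stack level, using only the primitive case of Yoshioka's theorem as the base of the induction and feeding the already-proved irreducibility of $\mathscr{M}^{\op{ss}}(a v_0)$ and $\mathscr{M}^{\op{ss}}((m_0-a)v_0)$ into the stratum estimate.

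Second, the inequality $\dim\bigl(R_{\op{ss}}^{N,m} \setminus R_{\op{s}}^{N,m}\bigr) < \dim R_{\op{s}}^{N,m}$ does not by itself give density: a priori $R_{\op{ss}}^{N,m}$ could have a low-dimensional component lying entirely in the strictly semistable locus. What rules this out is the deformation-theoretic lower bound recorded as Lemma~\ref{lemm:dimbound} (also from \cite{kimura2011}, but logically prior and independent of the irreducibility statement), which forces every component of the stack to have dimension at least $\la v,v\ra + 1$. You should make this step explicit; once you do, your stratum estimate really does yield density and the argument closes.
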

\begin{rmk}\label{lem:irr 2}
When $\la v,v \ra \leq 0$ and $\sss \neq \emptyset$, the topological spaces of moduli stacks and moduli schemes are  homeomorphic because the stacks are quotient stacks and all semistable sheaves are polystable. Therefore, the moduli stacks are irreducible.
\end{rmk}

\begin{lemm}[{\cite[Lemma 2.5]{yoshioka2009fourier}}]
Let $\mathscr{M}_{(v_1, v_2)}^{\text{HN}}(v)$ be the moduli stack of torsion-free sheaves with Mukai vector $v$ whose Harder-Narasimhan type is $(v_1, v_2)$. Then
\begin{enumerate}
\item the morphism $\mathscr{M}_{(v_1, v_2)}^{\text{HN}}(v) \rightarrow \mathscr{M}(v)$ is an immersion;
\item Let $E \in \mathscr{M}_{(v_1, v_2)}^{\text{HN}}(v)$, whose HN-filtration corresponds to  $0 \rightarrow F_1 \rightarrow E\rightarrow F_2 \rightarrow 0  $ and let $\mathscr{M}_{(v_1, v_2)}^{\text{HN}}(v) \rightarrow \mathscr{M}^{\text{ss}}(v_1) \times \mathscr{M}^{\text{ss}}(v_2)$ be a morphism which sends $[E] \mapsto ([F_1], [F_2])$. Then, all irreducible components of $\mathscr{M}_{(v_1, v_2)}^{\text{HN}}(v)$ are obtained as the pullback of an irreducible component of $\mathscr{M}^{\text{ss}}(v_1) \times \mathscr{M}^{\text{ss}}(v_2)$.
\end{enumerate}
\end{lemm}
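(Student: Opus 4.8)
The plan is to deduce (1) from Nitsure's schematic Harder--Narasimhan stratification and (2) from a deformation-theoretic study of the morphism that remembers only the graded pieces of the HN filtration, the decisive input being the vanishing of an $\op{Ext}^{2}$-group on the K3 surface $X$. For (1), given a flat family $\mathcal{E}$ of torsion free sheaves on $X$ over a scheme $T$, Nitsure's theory (\cite{nitsure2011schematic}, building on \cite{shatz1977decomposition}) yields a locally closed subscheme $T_{(v_{1},v_{2})}\hookrightarrow T$ parametrising exactly the fibres whose relative HN polygon is the one of type $(v_{1},v_{2})$, over which the fibrewise HN filtrations assemble into a filtration of $\mathcal{E}$ by flat subsheaves with graded pieces of the prescribed Mukai vectors. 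Since the HN filtration of a single sheaf is unique, carrying such a filtration is a \emph{property} of a $T$-point of $\mathscr{M}(v)$, not extra data; hence for every $T\to\mathscr{M}(v)$ the fibre product $\hn\times_{\mathscr{M}(v)}T$ is represented by the locally closed subscheme $T_{(v_{1},v_{2})}\subseteq T$. This exhibits $\hn\to\mathscr{M}(v)$ as representable by locally closed immersions, i.e.\ as an immersion.

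For (2), set $Y:=\mathscr{M}^{\text{ss}}(v_{1})\times\mathscr{M}^{\text{ss}}(v_{2})$ and let $\Phi\colon\hn\to Y$ send $E$ to $(E_{1},E/E_{1})$, which is well defined since HN graded pieces are semistable. I would prove that $\Phi$ is a \emph{smooth and surjective} morphism all of whose fibres are irreducible of the constant dimension $\la v_{1},v_{2}\ra$. Surjectivity holds because for any $([F_{1}],[F_{2}])\in Y$ the split sheaf $F_{1}\oplus F_{2}$ has HN type $(v_{1},v_{2})$, as $\mu(v_{1})>\mu(v_{2})$; conversely, for the same slope reason, \emph{every} extension $0\to F_{1}\to E\to F_{2}\to 0$ with $[F_{i}]\in\mathscr{M}^{\text{ss}}(v_{i})$ has HN filtration exactly $0\subset F_{1}\subset E$ (the maximal destabilising subsheaf of $E$ has slope $\ge\mu(v_{1})>\mu(v_{2})$, hence maps to $0$ in $F_{2}$ and so lies in $F_{1}$, forcing its slope to equal $\mu(v_{1})$, whereupon semistability of $F_{1}$ gives the reverse inclusion). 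Invoking the relative HN filtration once more, $\Phi$ is thereby identified, over a smooth atlas of $Y$ carrying universal sheaves $\mathcal{F}_{1},\mathcal{F}_{2}$, with the stack of extensions of $\mathcal{F}_{2}$ by $\mathcal{F}_{1}$. The relative obstruction space of a filtered sheaf over its graded pieces is $\op{Ext}^{2}(F_{2},F_{1})$, which vanishes here: Serre duality on the K3 surface $X$ gives $\op{Ext}^{2}(F_{2},F_{1})\cong\op{Hom}(F_{1},F_{2})^{\vee}$, and $\op{Hom}(F_{1},F_{2})=0$ because a nonzero homomorphism would produce a nonzero torsion free sheaf that is both a quotient of $F_{1}$ and a subsheaf of $F_{2}$, hence of slope at once $\ge\mu(v_{1})$ and $\le\mu(v_{2})$. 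Consequently $\Phi$ is smooth, of constant relative dimension $\op{ext}^{1}(F_{2},F_{1})-\op{hom}(F_{2},F_{1})=-\chi(F_{2},F_{1})=\la v_{1},v_{2}\ra$, with each fibre a quotient of the affine space $\op{Ext}^{1}(F_{2},F_{1})$ by a connected group, hence irreducible.

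It then remains only to note that a smooth surjective morphism with irreducible fibres induces a bijection between the irreducible components of its source and those of its target: if $D_{1},\dots,D_{k}$ are the irreducible components of $Y$, each $\Phi^{-1}(D_{j})$ is a closed substack of $\hn$, and is irreducible (the base change of $\Phi$ along $D_{j}$ is again smooth and surjective with irreducible fibres over the irreducible base $D_{j}$), the $\Phi^{-1}(D_{j})$ cover $\hn$, and none contains another (apply $\Phi$); so they are precisely the irreducible components of $\hn$, each a pullback of an irreducible component of $\mathscr{M}^{\text{ss}}(v_{1})\times\mathscr{M}^{\text{ss}}(v_{2})$, which is (2). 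The hardest step is the middle one, namely identifying $\hn$ over $Y$ with the extension stack and extracting smoothness of $\Phi$ from it: this is exactly the place where the triviality of $\omega_{X}$ is essential, through $\op{Ext}^{2}(F_{2},F_{1})=0$, without which $\Phi$ need not even be flat and the correspondence of components would require separate care.
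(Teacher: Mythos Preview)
The paper does not give its own proof of this lemma: it is simply quoted from \cite[Lemma~2.5]{yoshioka2009fourier} and then used to deduce the irreducibility of $\hn$ (Corollary~\ref{cor:irr HN}). So there is no in-paper argument to compare against.

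That said, your reconstruction is essentially the standard proof and is correct. A couple of small remarks. First, you do not need (and in fact cannot conclude) that $\op{Hom}(F_{2},F_{1})=0$; for rank~$1$ torsion free sheaves with $c_{1}(F_{1})>c_{1}(F_{2})$ this group is typically nonzero. What matters is that the \emph{relative} tangent, obstruction and automorphism spaces of the extension stack are $\op{Ext}^{1}(F_{2},F_{1})$, $\op{Ext}^{2}(F_{2},F_{1})$ and $\op{Hom}(F_{2},F_{1})$ respectively, so the vanishing of $\op{Ext}^{2}(F_{2},F_{1})$ alone gives smoothness of $\Phi$, and the fibre over $([F_{1}],[F_{2}])$ has underlying topological space the affine space $\op{Ext}^{1}(F_{2},F_{1})$ (the automorphism group $\op{Hom}(F_{2},F_{1})$ only contributes stacky structure, not extra components). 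Your formula $\op{ext}^{1}-\op{hom}=-\chi(F_{2},F_{1})=\la v_{1},v_{2}\ra$ for the relative dimension is then correct regardless of whether $\op{Hom}(F_{2},F_{1})$ vanishes. Second, your final paragraph actually proves something a bit stronger than the stated (2), namely a \emph{bijection} between the components of $\hn$ and those of $\mathscr{M}^{\text{ss}}(v_{1})\times\mathscr{M}^{\text{ss}}(v_{2})$; this of course implies the statement and is exactly what is needed for Corollary~\ref{cor:irr HN}.
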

\begin{cor}
	\label{cor:irr HN}
	$\mathscr{M}_{(v_1, v_2)}^{\text{HN}}(v)$ is an irreducible algebraic stack.\label{irreducible}
\end{cor}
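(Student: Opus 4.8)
The plan is to deduce the corollary almost formally from part (2) of the preceding lemma (\cite[Lemma 2.5]{yoshioka2009fourier}). That result says that every irreducible component of $\mathscr{M}_{(v_1,v_2)}^{\text{HN}}(v)$ is the pullback, along the morphism $[E]\mapsto([F_1],[F_2])$ induced by the Harder--Narasimhan filtration $0\to F_1\to E\to F_2\to 0$, of an irreducible component of $\mathscr{M}^{\text{ss}}(v_1)\times\mathscr{M}^{\text{ss}}(v_2)$. Consequently the number of irreducible components of $\mathscr{M}_{(v_1,v_2)}^{\text{HN}}(v)$ is bounded by that of the product $\mathscr{M}^{\text{ss}}(v_1)\times\mathscr{M}^{\text{ss}}(v_2)$, so it suffices to prove this product is irreducible; and since we work over $\mathbb{C}$, it is enough to show that each factor $\mathscr{M}^{\text{ss}}(v_i)$ is irreducible and nonempty.

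Next I would use that, in Definition \ref{dfn:mshn}, the Mukai vectors $v_i=(1,d_iH,a_i)$ have rank one. A rank-one torsion-free sheaf is automatically $\mu$-stable, so $\mathscr{M}^{\text{ss}}(v_i)$ is just the moduli stack of rank-one torsion-free sheaves with Mukai vector $v_i$. Any such sheaf $F$ embeds in its reflexive hull $F^{\vee\vee}$, which is a line bundle with $c_1=d_iH$ and hence equals $\mathscr{O}_X(d_iH)$ because $\rho(X)=1$; the quotient $F^{\vee\vee}/F$ has finite length $\ell_i=\tfrac12 d_i^2H^2-a_i+1$, determined by $v_i$. Thus $F\cong\mathscr{I}_Z(d_iH)$ for a zero-dimensional subscheme $Z\subset X$ of length $\ell_i$, so $\mathscr{M}^{\text{ss}}(v_i)$ is a $\mathbb{G}_m$-gerbe over $\op{Hilb}^{\ell_i}(X)$. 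Since $\op{Hilb}^{\ell_i}(X)$ is irreducible and nonempty, the same holds for $\mathscr{M}^{\text{ss}}(v_i)$, hence for the product, and the corollary follows. Alternatively, a direct computation gives $\la v_i,v_i\ra=2\ell_i-2\geq-2$, so one may instead invoke Lemma \ref{lemm:irr,yoshioka} when $\ell_i\geq 2$ and Remark \ref{lem:irr 2} when $\ell_i\in\{0,1\}$, nonemptiness being clear in all cases.

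The only step here that is more than formal bookkeeping is the identification of $\mathscr{M}^{\text{ss}}(v_i)$ with a $\mathbb{G}_m$-gerbe over $\op{Hilb}^{\ell_i}(X)$ — equivalently, the slope and Mukai-vector computation showing rank-one semistable sheaves are exactly twisted ideal sheaves — together with the classical irreducibility of $\op{Hilb}^{\ell}(X)$. Once that input is in place, part (2) of the cited lemma of Yoshioka immediately forces $\mathscr{M}_{(v_1,v_2)}^{\text{HN}}(v)$ to have a single irreducible component, which is the assertion.
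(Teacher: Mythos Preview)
Your proposal is correct and follows exactly the approach the paper intends: the corollary is stated without proof, as an immediate consequence of part (2) of the preceding lemma together with the irreducibility of each factor $\mathscr{M}^{\text{ss}}(v_i)$ for the rank-one vectors $v_i=(1,d_iH,a_i)$. Your identification of $\mathscr{M}^{\text{ss}}(v_i)$ with a $\mathbb{G}_m$-gerbe over $\op{Hilb}^{\ell_i}(X)$ (or alternatively the appeal to Lemma \ref{lemm:irr,yoshioka} and Remark \ref{lem:irr 2}) is precisely the missing detail, and nothing more is needed.
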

 We explain facts about $\op{dim}\mathscr{M}^{\text{ss}}(v)$ and $\op{dim}\mathscr{M}_{(v_1, v_2)}^{\text{HN}}(v)$, which are necessary to prove a proposition later.
\begin{lemm}[{\cite[Theorem 1.2]{kimura2011}}
{\cite[Lemma 5.3]{kurihara2008holomorphic}}, 
{\cite[Lemma 5.3.2]{minamide2018wall}}]
	Let $X$ be K3 surface of Picard number 1. $v = lv_0$ with  $v_0:\text{primitive}$ and $l \in \mathbb{Z}$. Then, 
	\begin{align*}
		&\op{dim}\mathscr{M}^{\text{ss}}(v)=\begin{cases} \la v, v \ra +1 & \la v, v \ra  > 0\\
		\la v, v \ra + l & \la v, v \ra =0 \\
		\la v, v \ra + l^2 & \la v_0, v_0 \ra = -2
	\end{cases},
 &&\op{dim}\mathscr{M}_{(v_1, v_2)}^{\text{HN}}(v) =\la v_1, v_1 \ra + \la v_2, v_2 \ra + \la v_1, v_2 \ra +2.
	\end{align*}
	\label{lemm:dimension formula}
	\end{lemm}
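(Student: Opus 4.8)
The plan is to prove the two dimension formulas separately, the second by fibring the Harder--Narasimhan stack over a product of Hilbert schemes of points. For $\op{dim}\mathscr{M}^{\op{ss}}(v)$ I would first note that at a general semistable sheaf $E$ the stack $\mathscr{M}^{\op{ss}}(v)$ is smooth, so its dimension there equals $\op{dim}\op{Ext}^1(E,E)-\op{dim}\op{Hom}(E,E)$; because $X$ is a K3 surface, Serre duality gives $\op{Ext}^2(E,E)\simeq\op{Hom}(E,E)^{\vee}$ and Mukai's formula gives $\chi(E,E)=-\la v,v\ra$, so this becomes $\la v,v\ra+\op{dim}\op{Hom}(E,E)$. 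It then remains to evaluate $\op{dim}\op{Hom}(E,E)$ at a general semistable $E$ in each regime. If $\la v,v\ra>0$ such an $E$ is stable (\cite{yoshioka1999irreducibility}), so $\op{dim}\op{Hom}(E,E)=1$ and we get $\la v,v\ra+1$. If $\la v,v\ra=0$ and $v=lv_0$, a general semistable $E$ is $\bigoplus_{i=1}^{l}E_i$ with the $E_i$ pairwise non-isomorphic stable sheaves of Mukai vector $v_0$, so $\op{dim}\op{Hom}(E,E)=l$ and we get $\la v,v\ra+l$. If $\la v_0,v_0\ra=-2$, the unique stable sheaf $E_0$ with $v(E_0)=v_0$ is rigid, and since no stable sheaf can have Mukai vector $kv_0$ for $k\ge 2$ (stability forces $\la\cdot,\cdot\ra\ge -2$), every semistable sheaf of Mukai vector $lv_0$ is $E_0^{\oplus l}$; hence $\mathscr{M}^{\op{ss}}(lv_0)$ is the classifying stack of $\op{GL}(l)$, of dimension $-l^2=\la v,v\ra+l^2$. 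Irreducibility of the stack in the last two cases, so that its general point governs the dimension, is Remark \ref{lem:irr 2}. This reproves the formula, which is in any case \cite[Theorem 1.2]{kimura2011}, \cite[Lemma 5.3]{kurihara2008holomorphic}, \cite[Lemma 5.3.2]{minamide2018wall}.

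For $\op{dim}\mathscr{M}_{(v_1,v_2)}^{\text{HN}}(v)$, observe that by Definition \ref{dfn:mshn} the Mukai vectors $v_1,v_2$ have rank $1$, so the graded pieces $F_1:=E_1$ and $F_2:=E/E_1$ of the HN filtration are $\mu$-stable rank one torsion free sheaves; moreover, since $\rho(X)=1$ pins down the underlying line bundle, $\mathscr{M}^{\op{ss}}(v_i)=\mathscr{M}^{\op{tf}}(v_i)$ is a $\mathbb{G}_m$-gerbe over $\op{Hilb}^{n_i}(X)$ with $n_i=\tfrac12(\la v_i,v_i\ra+2)$, in particular irreducible of dimension $\la v_i,v_i\ra+1$. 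Now consider the morphism $\pi\colon\mathscr{M}_{(v_1,v_2)}^{\text{HN}}(v)\to\mathscr{M}^{\op{ss}}(v_1)\times\mathscr{M}^{\op{ss}}(v_2)$, $[E]\mapsto([F_1],[F_2])$, of \cite[Lemma 2.5]{yoshioka2009fourier}. It is surjective, since the split sheaf $F_1\oplus F_2$ has HN filtration $0\subset F_1\subset F_1\oplus F_2$ (as $\mu(F_1)>\mu(F_2)$), and its fibre over $([F_1],[F_2])$ is the stack of extensions $0\to F_1\to E\to F_2\to 0$ with fixed identifications: the affine space $\op{Ext}^1(F_2,F_1)$, each of whose points carries the automorphism group $\op{Hom}(F_2,F_1)$ of the filtered sheaf. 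So $\pi$ has relative dimension $\op{dim}\op{Ext}^1(F_2,F_1)-\op{dim}\op{Hom}(F_2,F_1)$, and this number is constant, equal to $\la v_1,v_2\ra$: since $F_1,F_2$ have rank one with $\mu(F_1)>\mu(F_2)$ we have $\op{Hom}(F_1,F_2)=0$, hence $\op{Ext}^2(F_2,F_1)\simeq\op{Hom}(F_1,F_2)^{\vee}=0$ by Serre duality, and Mukai's formula $\chi(F_2,F_1)=-\la v_1,v_2\ra$ then gives it.

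Putting the pieces together, since the base of $\pi$ is irreducible and every fibre of the surjection $\pi$ has dimension $\la v_1,v_2\ra$,
\[
\op{dim}\mathscr{M}_{(v_1,v_2)}^{\text{HN}}(v)=\op{dim}\bigl(\mathscr{M}^{\op{ss}}(v_1)\times\mathscr{M}^{\op{ss}}(v_2)\bigr)+\la v_1,v_2\ra=\la v_1,v_1\ra+\la v_2,v_2\ra+\la v_1,v_2\ra+2 .
\]
I expect the one genuinely delicate point to be making this last ``total dimension equals base dimension plus fibre dimension'' step rigorous at the level of Artin stacks, together with the precise identification of the stacky fibres of $\pi$ and of their extra automorphisms $\op{Hom}(F_2,F_1)$; the safest way is to run the entire fibration argument on the $\op{GL}(N)$-atlases of Remark \ref{rmk:quotient stack}, i.e.\ over a product of Quot schemes as in \cite{yoshioka2009fourier}, where everything is an honest morphism of schemes and one passes to the quotient by $\op{GL}(N)$ only at the end. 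The cohomological inputs --- Serre duality and the Mukai pairing --- are, by contrast, immediate.
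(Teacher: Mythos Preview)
The paper does not give a proof of this lemma at all: it is stated with attributions to \cite{kimura2011}, \cite{kurihara2008holomorphic} and \cite{minamide2018wall} and then used as a black box, so there is no ``paper's own proof'' to compare against. Your sketch is essentially the standard argument behind those references and is correct in outline: for $\mathscr{M}^{\op{ss}}(v)$ you reduce to computing $\op{ext}^1(E,E)-\op{hom}(E,E)$ at a generic point and identify that generic point in each regime, and for $\mathscr{M}_{(v_1,v_2)}^{\text{HN}}(v)$ you fibre over $\mathscr{M}^{\op{ss}}(v_1)\times\mathscr{M}^{\op{ss}}(v_2)$ and compute the relative dimension as $\op{ext}^1(F_2,F_1)-\op{hom}(F_2,F_1)=\la v_1,v_2\ra$ using $\op{Ext}^2(F_2,F_1)=0$.

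Two small points worth tightening. First, in the case $\la v,v\ra=0$ the generic semistable sheaf is strictly polystable, so the stack is not obviously smooth there (the trace-free $\op{Ext}^2$ need not vanish); rather than invoking smoothness, it is cleaner to argue via irreducibility (Remark \ref{lem:irr 2}) together with constancy of $\op{dim}_x$ on an irreducible locally finite type stack (Lemma \ref{lemm:dimconst}), and then compute at a convenient point, or to quote \cite{kurihara2008holomorphic} directly. Second, your own caveat is exactly right: the additivity of dimensions along $\pi$ and the identification of the fibre automorphisms with $\op{Hom}(F_2,F_1)$ are most safely carried out on the Quot-scheme atlases, which is precisely how \cite{yoshioka2009fourier} does it; once you pass to schemes the argument is routine.
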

\vspace{-3mm}	

We also explain facts which are necessary to prove Theorem \ref{thm:mainthm1}.
	\begin{lemm}[({\cite[Proposition 1.1]{kimura2011}})]
  	The dimensions of all irreducible components of $\mathscr{M}(v)$ is more than(or equal to)  $\la v, v\ra +1 $.
	\label{lemm:dimbound}
	\end{lemm}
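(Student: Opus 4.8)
The plan is to reduce the statement to a local estimate, obtained from deformation theory, at a generic point of each component. Let $Z \subseteq \mathscr{M}(v)$ be an irreducible component and choose a point $[E] \in Z$ lying on no other irreducible component; then $\op{dim}Z = \op{dim}_{[E]}\mathscr{M}(v)$, so it suffices to prove $\op{dim}_{[E]}\mathscr{M}(v) \ge \la v,v\ra + 1$. The moduli stack of coherent sheaves carries the standard deformation--obstruction theory at $[E]$: first-order deformations of $E$ are classified by $\op{Ext}^1(E,E)$, obstructions lie in $\op{Ext}^2(E,E)$, and the infinitesimal automorphisms form $\op{Hom}(E,E)$ (an open subscheme of $\op{End}(E)$, so of dimension $\op{hom}(E,E)$). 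Consequently
\[ \op{dim}_{[E]}\mathscr{M}(v) \;\ge\; \op{ext}^1(E,E) - \op{ext}^2(E,E) - \op{hom}(E,E). \]
Since $\op{ext}^i(E,E) = 0$ for $i > 2 = \op{dim}X$, the right-hand side equals $-\chi(E,E)$, which by Hirzebruch--Riemann--Roch -- that is, by the defining property of the Mukai pairing on a K3 surface -- is $\la v,v\ra$. This is the ``virtual'' bound; the content of the lemma is the extra $+1$.

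To gain the extra $+1$ I would use the standard fact (going back to Artamkin and Mukai) that on a K3 surface the obstructions to deforming $E$ actually lie in the traceless subspace $\op{Ext}^2(E,E)_0 := \ker\!\big(\op{tr}\colon \op{Ext}^2(E,E) \to H^2(\mathscr{O}_X)\big)$: the trace of the obstruction class of $E$ is the obstruction to deforming the line bundle $\det E$, and this vanishes because $H^1(\mathscr{O}_X)=0$ forces the Picard scheme of $X$ to be smooth of dimension $0$, so $\det E$ is rigid and unobstructed. As $\op{tr}$ is surjective -- it is dual to the inclusion $\mathbb{C} = H^0(\mathscr{O}_X) \hookrightarrow \op{Hom}(E,E)$, $1 \mapsto \op{id}_E$ -- and $h^2(\mathscr{O}_X) = 1$, one gets $\op{dim}\op{Ext}^2(E,E)_0 = \op{ext}^2(E,E) - 1$. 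Using this smaller obstruction space in place of $\op{ext}^2(E,E)$ in the inequality above yields
\[ \op{dim}_{[E]}\mathscr{M}(v) \;\ge\; \op{ext}^1(E,E) - \op{ext}^2(E,E) + 1 - \op{hom}(E,E) \;=\; \la v,v\ra + 1, \]
and therefore $\op{dim}Z \ge \la v,v\ra + 1$ for every irreducible component $Z$, as claimed.

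The routine ingredients here are the deformation theory of $\mathscr{M}(v)$ and the Riemann--Roch computation. The one step that genuinely requires an idea -- and the only place the K3 hypothesis enters beyond the Serre duality already hidden inside Riemann--Roch -- is the reduction of the obstruction space from $\op{Ext}^2(E,E)$ to its traceless part $\op{Ext}^2(E,E)_0$ via the rigidity of $\op{Pic}(X)$; without this one only recovers the virtual lower bound $\la v,v\ra$.
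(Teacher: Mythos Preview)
The paper does not prove this lemma; it merely quotes \cite[Proposition~1.1]{kimura2011}. Your argument is the standard deformation-theoretic proof of this bound (essentially due to Mukai and Artamkin) and is correct, so there is nothing to compare: the key step---that the obstruction to deforming $E$ lands in $\op{Ext}^2(E,E)_0$ because $H^1(\mathscr{O}_X)=0$ makes $\det E$ unobstructed---is exactly what one expects the cited reference to contain.
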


	\begin{lemm}[{\cite[Lemma 2.21]{emerton2017dimension}}]
		Let $\mathscr{X}$ be a pseudo-catenary, jacobson, and locally noetherian algebraic stack. If $|\mathscr{X}|$ is irreducible, then $\op{dim}_x\mathscr{X}$ is constant for all $x \in |\mathscr{X}|$.\label{lemm:dimconst}
	\end{lemm}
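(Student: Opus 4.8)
The plan is to descend the statement along a smooth presentation of $\mathscr{X}$ by a scheme, reducing it to a claim about how local dimension varies on a jacobson, universally catenary, locally noetherian scheme, and then to transport this back to $\mathscr{X}$ using that a smooth morphism is flat, open and generizing.

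First I would invoke the definition of pseudo-catenary to fix a smooth surjection $P\colon U \to \mathscr{X}$ whose source $U$ is a scheme that is jacobson, universally catenary and locally noetherian (that $U$ is jacobson and locally noetherian follows from the corresponding properties of $\mathscr{X}$ together with smoothness of $P$). By the definition of the dimension of a stack at a point, $\op{dim}_x \mathscr{X} = \op{dim}_u U - \op{dim}_u(P)$ for any $u \in U$ lying over $x$, and the right-hand side is independent of the choice of $u$. Since $P$ is smooth, $u \mapsto \op{dim}_u(P)$ is locally constant on $U$ (\'etale-locally $P$ is a coordinate projection $\mathbb{A}^n_{\mathscr{X}} \to \mathscr{X}$), and since $P$ is flat and locally of finite presentation the induced map $|U| \to |\mathscr{X}|$ is open and generizing.

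Now, $|\mathscr{X}|$ being irreducible and sober, let $\xi$ be its generic point; I would show $\op{dim}_x \mathscr{X} = \op{dim}_\xi \mathscr{X}$ for every $x \in |\mathscr{X}|$. Fix $u \in U$ over $x$, choose an irreducible component $Z$ of $U$ through $u$ with $\op{dim}_u U = \op{dim}_u Z$, and let $\eta$ be the generic point of $Z$. Since $\xi$ is a generization of $P(\eta)$ and $P$ is generizing, there is $\eta' \rightsquigarrow \eta$ with $P(\eta') = \xi$; as $\eta$ is the generic point of an irreducible component of $U$, necessarily $\eta' = \eta$, whence $P(\eta) = \xi$. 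Because $\eta \rightsquigarrow u$, every open neighbourhood of $u$ contains $\eta$, so local constancy of $\op{dim}_\bullet(P)$ gives $\op{dim}_\eta(P) = \op{dim}_u(P)$. Finally $Z$ is integral, jacobson, universally catenary and locally noetherian, hence equidimensional, so $\op{dim}_u Z = \op{dim} Z = \op{dim}_\eta Z = \op{dim}_\eta U$. Combining,
\[ \op{dim}_x \mathscr{X} = \op{dim}_u U - \op{dim}_u(P) = \op{dim}_u Z - \op{dim}_u(P) = \op{dim}_\eta U - \op{dim}_\eta(P) = \op{dim}_\xi \mathscr{X}. \]

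The hypotheses enter exactly twice, and this is where the substance is: pseudo-catenarity is what permits choosing the smooth cover $U$ universally catenary, and the implication ``integral $+$ jacobson $+$ universally catenary $+$ locally noetherian $\Rightarrow$ equidimensional'' (equivalently: $\op{dim}_z Z$ is independent of $z$, and dense opens of $Z$ do not drop dimension) is the scheme-level fact underlying everything. Without them, $\op{dim}_u U - \op{dim}_u(P)$ could jump at a point of $U$ lying on several components of different dimensions; with them, the generizing property of $P$ pins every component's contribution to the value at $\xi$. I expect verifying (or locating the precise reference for) this equidimensionality statement to be the main obstacle; alternatively one may phrase the conclusion as ``$\op{dim}_\bullet\mathscr{X}$ is locally constant, hence constant since $|\mathscr{X}|$ is connected''.
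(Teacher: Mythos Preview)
The paper does not actually prove this lemma: it is quoted verbatim from \cite[Lemma 2.21]{emerton2017dimension} and used as a black box, so there is no in-paper argument to compare against. Your proposal is essentially the argument one finds in that reference: pass to a smooth presentation $P\colon U\to\mathscr{X}$ with $U$ universally catenary (this is exactly what pseudo-catenary buys), use that $\dim_u(P)$ is locally constant because $P$ is smooth, and reduce to the scheme-level statement that an integral, jacobson, universally catenary, locally noetherian scheme has $\dim_z Z$ independent of $z$.

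Two small points worth tightening. First, when you pick an irreducible component $Z$ through $u$ with $\dim_u U=\dim_u Z$, you are implicitly using that in a locally noetherian scheme a small enough open neighbourhood of $u$ meets only the finitely many components through $u$, so $\dim_u U=\max_Z\dim_u Z$; this is routine but should be said. Second, your deduction $\dim_\eta(P)=\dim_u(P)$ from local constancy plus $\eta\rightsquigarrow u$ is correct, but note this uses that ``locally constant'' means constant on an open neighbourhood of $u$, and any such neighbourhood contains $\eta$; you have this, just make the quantifier explicit. The equidimensionality input for integral jacobson universally catenary locally noetherian schemes is indeed the crux, and is proved in the Emerton--Gee reference (or can be extracted from EGA~IV); you are right to flag it as the place where the hypotheses do the real work.
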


\begin{rmk}\label{rmk:lowerbound}
\begin{enumerate}[(i)]
	\item algebraic stacks which are locally of finite type satisfy the assumption of Lemma \ref{lemm:dimconst}.
	\item by Lemma \ref{lemm:dimbound} and Lemma \ref{lemm:dimconst}, we get $\op{dim}_x\mathscr{M}^{tf}(v) \geq \la v, v\ra +1 $ $(\forall x \in |\mathscr{M}^{tf}(v)|).$
\end{enumerate}
\end{rmk}	

\subsection{A criterion of the irreducible components of $\tf$}
In this section, we classify the irreducible components of $\tf$. 

\begin{lem}
Let $v_1, v_2, v_1', v_2' \in \mathbb{Z} \bigoplus \op{Pic}(X) \bigoplus \mathbb{Z}$.
We suppose  $v_1 \neq v_1'$ and $v_2 \neq v_2'$.
	For the stacks  $\mathscr{M}_{(v_1, v_2)}^{\text{HN}}(v)$ and $\mathscr{M}_{(v_1', v_2')}^{\text{HN}}(v)$, 
We have $\overline{\mathscr{M}_{(v_1, v_2)}^{\text{HN}}(v)} \nsubseteq \overline{\mathscr{M}_{(v_1', v_2')}^{\text{HN}}(v)}$.\label{lem:lll}
\label{lem:inclusion}
\end{lem}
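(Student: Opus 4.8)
The plan is to compare Harder--Narasimhan polygons. Recall that a torsion free sheaf $E$ with HN-type $(v_1,v_2)$ (here $v_i=(1,d_iH,a_i)$ with $d_1>d_2$) has an HN-polygon with the single intermediate vertex $(1,d_1H\cdot H)$, which only depends on $d_1$ (equivalently on $[v_1]_1$, since $[v_1]_1+[v_2]_1=[v]_1$ is fixed). By the generalized Shatz theory of Nitsure (\cite{nitsure2011schematic}) and the classical result of Shatz (\cite{shatz1977decomposition}), the HN-polygon is upper semicontinuous under specialization: if $p\rightsquigarrow p'$ then $\op{HNP}(p')$ lies on or above $\op{HNP}(p)$, and in particular the maximal slope can only increase. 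Concretely, if $E'\in\overline{\mathscr{M}_{(v_1',v_2')}^{\op{HN}}(v)}$ is a specialization of the generic point of $\mathscr{M}_{(v_1,v_2)}^{\op{HN}}(v)$, then $d_1\le d_1'$.

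First I would reduce the statement to this polygon comparison. Suppose for contradiction that $\overline{\mathscr{M}_{(v_1,v_2)}^{\op{HN}}(v)}\subseteq\overline{\mathscr{M}_{(v_1',v_2')}^{\op{HN}}(v)}$. Since $\mathscr{M}_{(v_1,v_2)}^{\op{HN}}(v)$ is irreducible (Corollary \ref{cor:irr HN}), its generic point $\eta$ lies in $\overline{\mathscr{M}_{(v_1',v_2')}^{\op{HN}}(v)}$, hence is a specialization of the generic point $\eta'$ of the (irreducible) stack $\mathscr{M}_{(v_1',v_2')}^{\op{HN}}(v)$. Applying upper semicontinuity of the HN-polygon to $\eta'\rightsquigarrow\eta$ gives $d_1'\le d_1$. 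Now I run the symmetric argument: the hypotheses $v_1\neq v_1'$ and $v_2\neq v_2'$ force $d_1\neq d_1'$ (if $d_1=d_1'$ then $[v_1]_1=[v_1']_1$ and $[v_2]_1=[v_2']_1$; but then over the generic point the HN-type of a torsion free sheaf with fixed polygon and fixed total $v$ is pinned down, forcing $v_1=v_1'$ and $v_2=v_2'$ — here I use that a rank-$1$ HN-graded piece is determined by its first Chern class and that, generically along each stratum, the second Mukai component of $E_1$ is as large as possible, i.e.\ both strata have the same generic value $a_1$). So $d_1'<d_1$ strictly, and combined with the reverse inclusion argument (which would give $d_1<d_1'$) we obtain a contradiction. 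Hence no such inclusion can hold.

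The step I expect to be the main obstacle is making precise the claim that along the generic point of $\mathscr{M}_{(v_1,v_2)}^{\op{HN}}(v)$ the HN-polygon is \emph{exactly} the one determined by $(v_1,v_2)$ — i.e.\ that the stratum is not entirely contained in a locus where the polygon jumps — together with the claim that the full Mukai type $(v_1,v_2)$ (not just the polygon) is recovered generically. The polygon recovers $d_1$ immediately, but to recover $a_1$ I need to know that the generic member of the stratum has $E_1$ with the largest possible $\chi$ among rank-$1$ subsheaves of that slope; this follows because $\mathscr{M}_{(v_1,v_2)}^{\op{HN}}(v)$ is locally closed in $\mathscr{M}(v)$ with the induced reduced structure (Lemma \cite[Lemma 2.5]{yoshioka2009fourier}(1) gives that the map is an immersion) and the different $(v_1,v_2)$ with the same $d_1$ stratify the fixed-polygon locus by closed conditions ``$a_1\ge$ const'', the open dense one being realized at the generic point. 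Once this is set up, the semicontinuity input from Nitsure--Shatz does the rest, and the argument is purely formal. Alternatively, if pinning down $a_1$ scheme-theoretically is delicate, I can instead invoke the dimension formula in Lemma \ref{lemm:dimension formula}: for fixed $d_1$, the stratum with the generically-large $a_1$ has strictly larger dimension, so a lower stratum cannot contain a dense stratum in its closure, which already rules out the inclusion without ever identifying $a_1$ explicitly.
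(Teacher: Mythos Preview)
Your argument has a genuine gap: once you deduce $d_1'\le d_1$ from the assumed inclusion $\overline{\mathscr{M}_{(v_1,v_2)}^{\text{HN}}(v)}\subseteq\overline{\mathscr{M}_{(v_1',v_2')}^{\text{HN}}(v)}$, you never actually reach a contradiction. The ``reverse inclusion argument'' you invoke does not exist --- only one inclusion is assumed, and nothing in the setup gives you $\overline{\mathscr{M}_{(v_1',v_2')}^{\text{HN}}(v)}\subseteq\overline{\mathscr{M}_{(v_1,v_2)}^{\text{HN}}(v)}$. So the chain $d_1'<d_1$ and $d_1<d_1'$ is illusory.

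Your attempt to force $d_1\neq d_1'$ is also incorrect. The stratum $\mathscr{M}_{(v_1,v_2)}^{\text{HN}}(v)$ is defined with the \emph{full} Mukai vector $v_1=(1,d_1H,a_1)$ prescribed, so $a_1$ is constant on the stratum, not ``as large as possible generically''. There really are distinct strata with the same $d_1$ but different $a_1$ (equivalently different $\ell_1$, with $\ell_1+\ell_2$ fixed), and for these your polygon argument gives only $d_1'\le d_1$ with equality. Your fallback via the dimension formula fails for the same reason: if you carry out the computation in Lemma~\ref{lemm:dimension formula} with $v_1=(1,mH,\tfrac{m^2H^2}{2}-\ell_1+1)$, you find
\[
\dim\mathscr{M}_{(v_1,v_2)}^{\text{HN}}(v)=H^2\Bigl(m-\tfrac{n}{2}\Bigr)^{2}+3c_2-4-\tfrac{3n^2H^2}{4},
\]
which depends only on $m=d_1$, not on $a_1$. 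So for fixed $d_1$ all strata have \emph{equal} dimension, and no strict inequality is available there.

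The paper closes the gap differently. From the assumed inclusion and the fact that the two strata are distinct locally closed substacks, one lifts to the Quot-scheme atlas and obtains a \emph{strict} inequality $\dim\mathscr{M}_{(v_1',v_2')}^{\text{HN}}(v)>\dim\mathscr{M}_{(v_1,v_2)}^{\text{HN}}(v)$ (a proper closed irreducible subscheme of an irreducible scheme has strictly smaller dimension). On the other hand, Shatz--Nitsure semicontinuity gives $m\ge m'$, and since both satisfy $m,m'>n/2$ the displayed formula above yields the reverse (non-strict) inequality $\dim\mathscr{M}_{(v_1,v_2)}^{\text{HN}}(v)\ge\dim\mathscr{M}_{(v_1',v_2')}^{\text{HN}}(v)$. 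These two together are the contradiction. The crucial point you are missing is that the strict dimension drop comes from the \emph{inclusion} itself, not from any comparison of $d_1$ with $d_1'$.
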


\begin{proof}
We assume that $\overline{\mathscr{M}_{(v_1, v_2)}^{\text{HN}}(v)} \subseteq \overline{\mathscr{M}_{(v_1', v_2')}^{\text{HN}}(v)}$ holds. If let $p$ and $ p'$ be the generic points of $\overline{\mathscr{M}_{(v_1, v_2)}^{\text{HN}}(v)}$ and $\overline{\mathscr{M}_{(v_1', v_2')}^{\text{HN}}(v)}$ respectively, 
there exist $ N, m \in \mathbb{Z}_{\geq 0}$ such that the morphism $R_{(v_1, v_2)}^{N, m}\rightarrow \overline{\mathscr{M}_{(v_1, v_2)}^{\text{HN}}(v)}$ is dominant, i.e., $R_{(v_1, v_2)}^{N, m} \ni \exists q \mapsto p \in \overline{\mathscr{M}_{(v_1, v_2)}^{\text{HN}}(v)}$.
Note that $R_{(v_1, v_2)}^{N, m}$ is irreducible.

By the fact that $p' \rightsquigarrow p$ and \cite{laumon2000moret} , there exists $q' \in R_{(v_1', v_2')}^{N,m}$ such that $R_{(v_1', v_2')}^{N,m} \ni q' \mapsto p' \in \overline{\mathscr{M}_{(v_1, v_2)}^{\text{HN}}(v)}$ and $q' \rightsquigarrow q$ in $R_{\text{tf}}^{N,m}$ and we think of $q'$ as the generic point of $R_{(v_1', v_2')}^{N,m}$. So, we get $\op{dim}\overline{R_{(v_1, v_2)}^{N,m}} = \op{dim}R_{(v_1, v_2)}^{N,m}$ and $\op{dim}\overline{R_{(v_1', v_2')}^{N,m}} = \op{dim}R_{(v_1', v_2')}^{N,m}$. 
On the other hand, we have $\mathscr{M}_{(v_1, v_2)}^{\text{HN}}(v)$ $\neq$ $\mathscr{M}_{(v_1', v_2')}^{\text{HN}}(v)$ .
Therefore, it holds that $\op{dim}R_{(v_1', v_2')}^{N,m}> \op{dim}R_{(v_1, v_2)}^{N,m}$.
$($ if $\op{dim}R_{(v_1', v_2')}^{N,m} = \op{dim}R_{(v_1, v_2)}^{N,m}$, we have $\overline{R_{(v_1', v_2')}^{N,m}} = \overline{R_{(v_1, v_2)}^{N,m}}$, this contradicts to uniqueness of generic points.$)$





If $v_1 := (1, mH, \frac{m^2H^2}{2}-\ell_1+1) $ and $v_2 := ( 1, (n-m)H, \frac{(n-m)^2H^2}{2}-\ell_2 +1) $, then 
\begin{small}
\begin{align*}
	\op{dim}\mathscr{M}_{(v_1, v_2)}^{\text{HN}}(v) &= \la v_1, v_1 \ra + \la v_2, v_2 \ra + \la v_1, v_2 \ra +2 \\
&= (2\ell_1-2) + (2\ell_2-2) + \left\{ m(n-m)H^2 - \left( \frac{m^2H^2}{2}-\ell_1 +1 \right) - \left(\frac{(n-m)^2H^2}{2}-\ell_2 +1 \right) \right \} +2\\
&= 3(\ell_1+\ell_2)-4 + m(n-m)H^2 - \frac{m^2H^2}{2} -\frac{(n-m)^2H^2}{2}\\
&= -2m(n-m)H^2 - \frac{m^2H^2}{2} -\frac{(n-m)^2H^2}{2}+(3c_2-4) = H^2\left(m-\frac{n}{2}\right)^2+3c_2-4-\frac{3n^2H^2}{4} .
\end{align*}
\end{small}
Note that $\ell_1 + \ell_2 +m(n-m)H^2 = c_2$ in the above calculation.

On the other hand, the map $|\tf| \ni p'' \mapsto$ HNP($p''$) is upper semicontinuous  by \cite{shatz1977decomposition} or \cite{nitsure2011schematic}, where HNP($p''$) := HNP($E''$)($E''$ is the corrsponding object in $\tf$ to $p''$).
So, we have HNP($p$) $\geq$ HNP($p'$) because $p' \rightsquigarrow p$. 
This means $m \geq m'$ ($v'_1 := \la 1, m'H, \frac{m'^2H^2}{2}-\ell'_1+1 \ra$).
And, we have $\op{dim}\mathscr{M}^{\text{HN}}_{(v_1,v_2)}(v) \geq \op{dim}\mathscr{M}^{\text{HN}}_{(v'_1,v'_2)}(v)$ by the above calculation.
This is equivalent to $\op{dim}R_{(v_1, v_2)}^{N,m} \geq \op{dim}R_{(v_1', v_2')}^{N,m}$ because of the irreducibility of $\hn$ and Remark \ref{rmk:quotient stack}.
This is a contradiction. 
Therefore, we get the proposition.
\end{proof}

\begin{rmk}
\label{180857_3Dec21}
\begin{enumerate}
 \item It is shown that $\overline{\sss} \supseteq \overline{\hn} $ implies $\op{dim}\sss > \op{dim}\hn$ by the method of the first half of the proof of Lemma \ref{lem:inclusion}, Remark \ref{rmk:quotient stack} and the irreducibility of $\sss$ and $\hn$.
\item If $\sss \neq \emptyset$, $\overline{\sss} \nsubseteq \overline{\mathscr{M}_{(v_1, v_2)}^{\text{HN}}(v)}$.
If $\overline{\sss} \subseteq \overline{\mathscr{M}_{(v_1, v_2)}^{\text{HN}}(v)}$, then we have HNP($p_1$) $\geq$ $\op{HNP}(p_2$) where $p_1$(resp.$p_2$) is the generic point of $\sss(\op{resp}.\hn)$. 
However this does not occur.
 
\end{enumerate}

\end{rmk}

\subsection{The proof of Theorem \ref{thm:mainthm1} 
}
We prove Theorem \ref{thm:mainthm1} 
by using the lemmas before. 

\begin{proof}
Any Mukai vector $v$ satisfies one of the following disjoint conditions ;
\begin{align*}
&(a) : \langle v,v \rangle > 0, &&(b) : \la v, v \ra=0, -2 \text{ and } v \text{ is primitive }, \\ 
&(c):  \la v_0, v_0 \ra=0, -2 \text{ and } v \text{ is non-primitive }, &&(d) : \langle v,v \rangle < -2 \text{ and } \la v_0, v_0 \ra \neq -2. 
\end{align*}
And, we will prove Theorem \ref{thm:mainthm1} in each case.
	We first have a stratification of $\tf$ by $\sss$ and $\mathscr{M}_{(v_1, v_2)}^{\text{HN}}$ $(v)$. (About HN stratification, for example, see \cite{nitsure2011schematic} or \cite{hoskins2018stratifications}, Section 5). 
	In the case of $(a)$ and $(b)$, if $\la v_1, v_2\ra \leq 1$, 
	\begin{align*}
		\op{dim}\sss &= \la v, v \ra +1
		= \la v_1, v_1 \ra +\la v_2, v_2 \ra + 2\la v_1, v_2 \ra + 1
		= \op{dim}\mathscr{M}_{(v_1, v_2)}^{\text{HN}}(v)+ \la v_1, v_2 \ra - 1.
	\end{align*}
	 By Remark \ref{180857_3Dec21}, we get $\overline{\sss}\nsupseteq \overline{\mathscr{M}_{(v_1, v_2)}^{\text{HN}}(v)}$ and $\overline{\sss}\nsubseteq \overline{\mathscr{M}_{(v_1, v_2)}^{\text{HN}}(v)}$. 
	  On the other hand, we consider the case $\la v_1, v_2\ra > 1$. 
We assume $\overline{\sss}\nsupseteq \overline{\mathscr{M}_{(v_1, v_2)}^{\text{HN}}(v)}$. 
Then, for general $ x \in \overline{\mathscr{M}_{(v_1, v_2)}^{\text{HN}}(v)}$, we have $\op{dim}_x\tf < \la v, v\ra +1$ and this contradicts Remark \ref{rmk:lowerbound}. 
So we have $\overline{\sss}\supseteq \overline{\mathscr{M}_{(v_1, v_2)}^{\text{HN}}(v)}$. By Lemma \ref{lemm:irr,yoshioka}, Remark \ref{lem:irr 2}, 
	 and Corollary \ref{cor:irr HN}, the stacks $\sss$ and $\hn$ are irreducible. Therefore by Lemma \ref{lem:inclusion}, we can classify the irreducible components of $\tf$ as the statements of the theorem.
  In the case of $(c)$, we can show that for any stack of HN-filtration, $\la v_1, v_2 \ra \leq 0$ and $\op{dim}\hn \geq \op{dim}\sss$.

 Let $v := ( 2, nH, \frac{n^2H^2}{2}-c_2+2 )$. 
Then, we have
\[
 \frac{1}{4}\la v, v \ra = \la v_0, v_0 \ra = \frac{-n^2H^2}{4}+c_2-2 = 0 \text{ or } -2. 
\]
And Let $v_1 := (1, kH , \frac{k^2H^2}{2}-\ell_1+1), v_2 := (1, lH , \frac{l^2H^2}{2}-\ell_2+1)$.
Then, 
\small{
\begin{align*}
 \la v_1, v_2 \ra &= klH^2 -\frac{n^2H^2}{2}+c_2-2 
  = klH^2-\frac{n^2H^2}{4}+(\frac{n^2H^2}{4}+c_2-2) = -\frac{H^2}{4}(n^2-4kl)+ (\frac{n^2H^2}{4}+c_2-2) \\
&= -\frac{H^2}{4}(k-l)^2 + (\frac{n^2H^2}{4}+c_2-2) 
= -\frac{H^2}{4}(k-l)^2 + \begin{cases}
			   0 & \la v_0, v_0 \ra = 0 \\
                          -2 & \la v_0, v_0 \ra =-2
			  \end{cases}  \leq 0 .
\end{align*}
}
So, we have
$\op{dim}\hn = \la v, v \ra - \la v_1, v_2 \ra + 2 \geq \op{dim}\sss$.
we get $\overline{\sss}\nsupseteq \overline{\mathscr{M}_{(v_1, v_2)}^{\text{HN}}(v)}$ and $\overline{\sss}\nsubseteq \overline{\mathscr{M}_{(v_1, v_2)}^{\text{HN}}(v)}$ for any $(v_1, v_2)$ by Remark \ref{180857_3Dec21}.

	 In the case (d), we have $\sss= \emptyset$ by \cite[Cor 0.3]{yoshioka1999irreducibility}. So, we can classify the irreducible components.
	 	 \end{proof}
	
	\section{An application to Brill-Noether theory of Hilbert schemes of points}
	In \cite{walter1995components}, an application of the irreducible components of moduli stacks of torsion free sheaves on ruled surfaces are performed. In this section, we replace ruled surfaces by K3 surfaces. For a K3 surface $X$, let $N$ be a non-negative integer and let $D$ be an effective divisor on $X$ such that $h^0(X, \mathscr{O}(D)) \geq N$. And let $\op{Hilb}^N(X)$ be the Hilbert scheme of finite schemes of length $N$ on $X$. For the Hilbert schemes $\op{Hilb}^N(X)$ of finite schemes of length $N$ on $X$, We define $W_N^i(D)$ as follows.
\[
W_N^i(D) := \{ [Z] \in \op{Hilb}^N(X) \mid h^1(\mathscr{I}_Z(D)) \geq i+1\}.
\]
Then, it is known that $ W_N^i(D) \subseteq \op{Hilb}^N(X)$ is a closed subscheme from upper semicontinuity of cohomology of flat families of sheaves and $h^1(\mathscr{I}_Z(D)) = i+1 $ for general members of each irreducible component of $W_N^i(D)$. In particular, if $i=0$, we have a bijection between the irreducible components of $ W_N^i(D)$ and the irreducible components of $\mathscr{M}^{\text{tf}}(v)$ whose general member $E$ satisfies the conditions  $(1): H^1(X, E)=H^2(X, E)=0$ and $(2):\exists s \in H^0(X, E)$ such that $E / s \mathscr{O}_X$ is torsion free. where, $v := (2, D, \frac{D^2}{2}-N+2)$. Note that the conditions (1) and (2) are open conditions. Moreover, if $E$ is a general member of an irreducible component $\mathscr{M}'$ of $\mathscr{M}^{\text{tf}}(v)$ which satisfies $(1), (2)$ and let the corresponding irreducible component of $W_N^i(D)$ be $V$, then 
 \begin{equation}
 \op{dim}V = \op{dim}\mathscr{M}'+h^0(E).
 \tag{$\spadesuit$} \label{dimfom} 
 \end{equation}
\begin{proof}[Proof of Theorem \ref{thm:mainthm2}]
	We get the claim of Theorem \ref{thm:mainthm2} by the above comment, Lemma \ref{lem:notsemistable}, Lemma \ref{lem:semistable} and calculating and rearranging  $\chi(v) > 0$ and $h^0(\mathscr{O}_X(n-m)) > \ell_2$. 

For example, a not semistable component $\overline{\hn} \subset \tf$ correponds to a component of $W^0_N(nH)$ if and only of the folloing conditions hold : 
\begin{itemize}
\item  $\la v_1, v_2 \ra \leq -1 (\text{ because }\hn$ is an irreducible component of $\tf$),
\item $2m \geq n > m > 0 \Leftrightarrow [v_1]_1 > 0, [v_2]_2 > 0$,
\item $\chi(v) > 0 \text{ (This always holds by the assumption } N \leq h^0(\mathscr{O}(n)) \text{ and the Riemann-Roch formula)} $,
\item $h^0(\mathscr{O}_X(n-m)) > \ell_2 \Leftrightarrow -1 < [v_2]_2$,
\end{itemize}
 where $v_1:=(1, mH, \frac{m^2H^2}{2}-\ell_1+1)$, and let $v_2:= (1, (n-m)H, \frac{(n-m)^2H^2}{2}-\ell_2+1)$.
Thus, we have ($\alpha$) of Theorem \ref{thm:mainthm2}.
In the same way, we have ($\beta$) of Theorem \ref{thm:mainthm2}.
\end{proof}

\subsection{About not semistable components}
\begin{lem}Let $v:=(2, nH, \frac{n^2H^2}{2}-N+2)$, let 
$v_1:=(1, mH, \frac{m^2H^2}{2}-\ell_1+1)$, and let $v_2:= (1, (n-m)H, \frac{(n-m)^2H^2}{2}-\ell_2+1)$ such that $v=v_1+v_2$.
	We assume that $E$ is a general member of $\mathscr{M}_{(v_1, v_2)}^{\text{HN}}(v)$. Then, $E$ satisfies the conditions (1), (2) if and only if the following conditions hold. $(a):2m \geq n > m > 0$, $(b):\chi(E) > 0$, $(c):h^0(\mathscr{O}_X(n-m)) > \ell_2$. \label{lem:notsemistable}
\end{lem}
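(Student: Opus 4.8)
The plan is to analyze the general member $E$ of $\mathscr{M}_{(v_1,v_2)}^{\op{HN}}(v)$ via its canonical extension $0 \to F_1 \to E \to F_2 \to 0$, where by Lemma \ref{lemm:irr,yoshioka}, Remark \ref{lem:irr 2} and Corollary \ref{cor:irr HN} we may take $F_1, F_2$ to be general members of $\mathscr{M}^{\op{ss}}(v_1)$, $\mathscr{M}^{\op{ss}}(v_2)$ (both of rank $1$, hence $F_i = \mathscr{I}_{Z_i}(\cdot)$ twists of ideal sheaves, or line bundles when $\ell_i = 0$). Condition $(2)$ — that there is a section $s \in H^0(E)$ with $E/s\mathscr{O}_X$ torsion free — amounts to finding $\mathscr{O}_X \hookrightarrow E$ with torsion-free, rank-one cokernel $\mathscr{I}_Z(nH)$; since $F_1$ has slope $\mu(F_1) = mH^2 > 0$ and $F_2$ has slope $(n-m)H^2$, the HN condition $\mu(F_1) > \mu(F_2)$ forces $m > n - m$, i.e. $n > m$, and effectivity of $[v_1]_1 = mH$ together with $\mathscr{O}_X \hookrightarrow F_1$ (which requires $H^0(F_1) \neq 0$) forces $m > 0$; dually, the section of $E$ must project to a section of $F_2$ or land in $F_1$, and chasing when $E/s\mathscr{O}_X$ stays torsion free will pin down the inequality $n \geq 2(n-m)$, i.e. $2m \geq n$. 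So $(2) \Leftrightarrow (a)$ after also checking $H^0(\mathscr{O}_X(n-m)) > \ell_2$ is exactly what guarantees the relevant section of $F_2$ exists and cuts out a length-$\ell_2$ scheme in a way compatible with torsion-freeness of the cokernel — this gives $(c)$.

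Next, condition $(1)$, namely $H^1(X,E) = H^2(X,E) = 0$. For $H^2$: by Serre duality on the K3 surface $H^2(E) = H^0(E^\vee)^*$, and since $E$ is a non-split extension with $F_1$ of positive slope, $E^\vee$ (or rather $\mathscr{H}om(E,\mathscr{O}_X)$) has no sections — I would argue this from the slopes of the HN factors of $E^\vee$ all being negative, so $H^2(E) = 0$ automatically under $(a)$. For $H^1$: the long exact sequence $H^1(F_1) \to H^1(E) \to H^1(F_2) \to H^2(F_1)$ reduces the vanishing to controlling $H^1(F_i)$ for the general ideal-sheaf twists $F_i = \mathscr{I}_{Z_i}(\cdot)$; here I would invoke that for a \emph{general} finite subscheme of the appropriate length on a K3 of Picard number $1$, the relevant $h^1$ is as small as the Riemann–Roch count permits, and then the combinatorial bookkeeping shows $H^1(E) = 0$ is equivalent to $\chi(E) > 0$ (using that $\chi(E) = h^0(E)$ once $h^1 = h^2 = 0$, and conversely if $\chi \le 0$ then $h^1 > 0$). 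This yields $(b)$.

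The key steps in order: (i) reduce to the split-off extension picture with general $F_1, F_2$; (ii) translate $(2)$ into the existence of a nowhere-"vertical" section, extracting $n > m$, $m > 0$, and $2m \ge n$ together with the existence condition $(c)$; (iii) dispatch $H^2(E) = 0$ via Serre duality and slope considerations; (iv) handle $H^1(E) = 0$ by the cohomology sequence plus genericity of $Z_1, Z_2$, arriving at $(b)$; (v) assemble the equivalences in both directions, being careful that the genericity used for "$\Rightarrow$" and the openness of $(1),(2)$ used for "$\Leftarrow$" match up. The main obstacle I anticipate is step (ii): carefully justifying that for the \emph{general} member — not merely some member — the section $s$ can be chosen so that $E/s\mathscr{O}_X$ is torsion free precisely when $2m \ge n$ and $H^0(\mathscr{O}_X(n-m)) > \ell_2$, since this requires understanding how the extension class interacts with the finitely many "bad" sections; I expect to handle this by a dimension count on the space of extensions versus the space of sections with torsion in the cokernel, using the dimension formula for $\mathscr{M}_{(v_1,v_2)}^{\op{HN}}(v)$ from Lemma \ref{lemm:dimension formula}.
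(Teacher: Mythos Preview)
Your outline has the right shape but contains two genuine gaps.

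For $H^1(E)=0$: you propose to use the long exact sequence together with genericity of $Z_1,Z_2$, but conditions (a), (b), (c) do \emph{not} force $\ell_1\le h^0(\mathscr{O}(m))$. Since $\chi(E)=\chi(F_1)+\chi(F_2)$ and (c) only gives $\chi(F_2)>0$, one can have $\ell_1>h^0(\mathscr{O}(m))$ while $\chi(E)>0$ still holds; in that regime $H^1(\mathscr{I}_{Z_1}(m))\ne 0$ for \emph{every} $Z_1$, and the vanishing of $H^1(E)$ depends on surjectivity of the connecting map $H^0(F_2)\to H^1(F_1)$, which genericity of $Z_1,Z_2$ alone does not supply. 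The paper fills this by induction on $\ell_1$: from an $E'$ with $\ell(Z_1')=\ell_1-1$ and $H^1(E')=0$ one takes a general elementary modification $E'\twoheadrightarrow k(x)$ and checks that the kernel lies in $\hn$ with the correct $\ell_1$ and inherits $H^1=0$. (Incidentally, your derivation of (a) is tangled: $\mu(F_1)>\mu(F_2)$ gives $m>n-m$, i.e.\ $2m>n$, not ``$n>m$'' as you write; the inequality $n>m$ actually comes from condition~(1), since $H^2(E)=0$ forces $H^2(\mathscr{O}(n-m))=0$. Likewise $m>0$ is not obtained from ``$\mathscr{O}_X\hookrightarrow F_1$''.)

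For condition (2) in the converse direction, ``a dimension count on extensions versus bad sections'' is too vague to constitute a proof. The paper's argument is a concrete Bertini-type estimate: one first reduces to $\ell_1=0$, where the Cayley--Bacharach property makes the general $E$ locally free, and then proves the inequality $h^0(E(-k))+\dim|kH|<h^0(E)$ for all $k>0$, so that the general section has finite zero locus and hence torsion-free cokernel. The case $\ell_1>0$ is then deduced by realizing such an $E$ as the kernel of a carefully chosen quotient $E'\twoheadrightarrow\bigoplus_{i=1}^{\ell_1}k(x_i)$ that preserves a good section, using that $E'$ is generically globally generated. Finally, the boundary case $2m=n$, $\ell_2=1$ --- where every $E\in\hn$ is isomorphic to $\mathscr{O}(m)\oplus\mathscr{I}_x(m)$ and is never locally free, so the Cayley--Bacharach step fails --- must be handled separately by an explicit choice of section; your proposal does not mention this case at all.
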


\begin{proof}
If the conditions (1), (2) are satisfied, it is clear that a general $E$ satisfies (b). 
If let the HN-filtration of $E$ be the sequence
\[
 0 \rightarrow \mathscr{I}_{Z_1}(m) \rightarrow E \rightarrow \mathscr{I}_{Z_2}(n-m) \rightarrow 0
\]
, where $v(\mathscr{I}_{Z_1}(m)) = v_1, v(\mathscr{I}_{Z_2}(n-m)) = v_2$.
We have (a) because $E \in \hn$ and $h^2(\mathscr{O}(m)) = h^2(\mathscr{O}(n-m)) =  0$ holds.
We also have $h^0(\mathscr{O}(n-m)) > \ell_2$ because $h^1(\mathscr{I}_{Z_2}(n-m)) = 0$ and we have $0 \rightarrow \mathscr{I}_{Z_2}(n-m) \rightarrow \mathscr{O}(n-m) \rightarrow \mathscr{O}_{Z_2} \rightarrow 0$.
Note that the condition $h^0(\mathscr{O}(n-m)) = \ell_2$ can not occur. 
If $h^0(\mathscr{O}(n-m)) = \ell_2$, then any global section $s$ of general sheaf $E \in \hn$ is included in $H^0(\mathscr{I}_{Z_1}(m))$.
Because all non-zero sections in $H^0(\mathscr{I}_{Z_1}(m))$ never induce torsion free quotients, so $E/s\mathscr{O}_X$ include a torsion sheaf $\mathscr{I}_{Z_1}(m)/s \mathscr{O}_X$. 
This contradicts to the condition (2).
So, we have the condition (c).

Conversely, we assume that the conditions (a), (b) and (c) are satisfied.
First, we prove the condition (1) by induction for $\ell_1$ (cf. \cite[Lem 3.3 and Lem 4.5]{walter1995components}). Note that $H^2(E) = 0$ for a general $E \in \hn$ because $H^2(\mathscr{I}_{Z_1}(m)) = H^2(\mathscr{I}_{Z_2}(n-m)) = 0$. If $\ell_1 = 0$, then we have $H^1(E) = 0$ in the same way. For general $\ell_1 > 0$,  we prove $H^1(E) = 0$ for a general $E \in \hn$. We assume $E'$ fits in the exact sequence
\[
0 \rightarrow \mathscr{I}_{Z'_1}(m) \rightarrow E' \rightarrow \mathscr{I}_{Z'_2}(n-m) \rightarrow 0
\]
which is the HN filtration of $E'$ with $\ell(Z'_1) = \ell_1-1$ and $\ell(Z'_2) = \ell_2$. If $H^1(E') = 0$ and $E'$ satisfies the conditions (a), (b) and (c), then, $E'$ have a nonzero global section $s$. And, for a general point $x \in X$ and a general one dimensional quotient $E' \twoheadrightarrow E' \otimes k(x) \twoheadrightarrow k(x)$ of the fiber of $E'$ at $x$ denoted by $\varphi$, we have $\varphi(s) \neq 0$. Note that we can assume $x \notin Z'_1$.  Let $E$ be the kernel of $\varphi$. Then, we have $h^0(E) = h^0(E')-1$ and $H^1(E) = 0$. And, we get the HN filtration of $E$
\[
0 \rightarrow \mathscr{I}_{Z'_1 \cup \{x\}}(m) \rightarrow E \rightarrow \mathscr{I}_{Z'_2}(n-m) \rightarrow 0
\]
because of the HN-filtration of $E'$ and the assumption $x \notin Z'_1$. So, we get condition (1) for general $\ell_1 > 0$.

Next, we prove the condition (2) under the condition (1).

We consider the conditions $(\alpha): 2m= n, (\beta): \ell_2 =1$.
And, we divide our proof into two cases:
\begin{itemize}
 \item[(i)] $(\alpha)$ or $(\beta)$ is not true.
\item[(ii)] Both $(\alpha)$ and $(\beta)$ are true.
\end{itemize}

\underline{Case (i)}
It is enough to prove the following claim.

\begin{cl}\label{lem:claim}
	Let $k$ be a positive integer. We assume that $\ell_1 = 0$. Then, we have
	\[
	h^0(E(-k)) + \op{dim}|kH| < h^0(E).
	\]
\end{cl} 

We show Claim \ref{lem:claim} induces the condition (2) before proving it.
If the claim is true, then a general $E \in \hn$ with $\ell_1 = 0$ is a vector bundle because the Cayley-Bacharach property (cf. \cite[Thm 5.1.1]{huybrechts2010geometry}) holds for a pair $(Z_2,\mathscr{O}_X(n-2m))$ by the choice of $m$ and $\ell_2$, where $Z_2$ is a general set of $\ell_2$ points. 
And, the set $H^0(E) \setminus  \bigcup_{
	C \in | kH |,  k \in \mathbb{N}}H^0(E(-C))$  is a non-empty open set from Claim \ref{lem:claim}.
 So, a general section $s$ of a general  $E \in \hn$ with $\ell_1 = 0$ defines a torsion free quotient $E/s\mathscr{O}_X$ because the zero set $Z(s)$ of $s$ is a finite set (cf. \cite[Ch. 1, $\S5$]{okonek2011vector}). 
 
 In the case $\ell_1 >0$, we have a vector bundle $E'$ fitting into the sequence
\[	0 \rightarrow \mathscr{O}_X(m) \rightarrow E' \rightarrow \mathscr{I}_{Z_2}(n-m) \rightarrow 0 \quad (\ell(Z_2) = \ell_2)
\]
 whose general section $s$ determines a torsion free quotient because of the case $l_1 = 0$.  

 In addition, $E'$ is generically globally generated. Note that we say that $E'$ is generically generated if the evaluation map $\op{ev}:H^0(E') \otimes \mathscr{O}_X \rightarrow E'$ is surjective on an open set of $X$. 
 Actually, from the condition (a) and (c), $\mathscr{O}_X(m)$ and $ \mathscr{I}_{Z_2}(n-m)$ is generically globally generated. 
 So, a simple diagram chase shows that $E'$ is generically global generated. 

Let $U$ be the subset of $H^0(E')$ of the sections defining torsion free quotients. 
Then, a natural $\mathbb{C}$-linear homomorphism $\overset{\sim}{\op{ev}}: H^0(E') \rightarrow H^0(E' \otimes k(x))$ obtained from $\op{ev}$ above is surjective and $\overset{\sim}{\op{ev}}|_U$ is dominant for general $x \in X$ because $E'$ is generically globally generated.
So, we can take general $\ell_1$ points $x_1, \cdots , x_{\ell_1}$ on $X$ and a general section $s$ such that
 $s \notin \mathscr{O}_X(m) \otimes k(x_i)$ for all $i$ and $s$ defines a torsion free quotient. 
Then, we can take one-dimensional quotients $\varphi_i:E' \twoheadrightarrow E' \otimes k(x_i) \twoheadrightarrow k(x_i) (1 \leq i \leq \ell_1)$ such that $\varphi_i |_{\mathscr{O}_X(m)} \neq 0$ for all $i$ and $\varphi_i(s) = 0$ for all $i$. 
We consider the quotient $ \varphi: E' \twoheadrightarrow \bigoplus_{i=1}^{\ell_1} k(x_i)$ obtained from $\varphi_i$. 
If let $E$ be the kernel of $\varphi$, then $E \in \hn$ and a general section $s$ of $E$ defines a torsion free quotient.
   
\begin{proof}[Proof of Claim \ref{lem:claim}]
In this paper, we only consider the case 
\begin{align*}
	&2m > n > m > 0, &&
\ell(Z_2) > h^0(\mathscr{O}(n-m-1) = \frac{(n-m-1)^2}{2}H^2+2, \\
 &m \geq 3, &&  n-m \geq 3.
\end{align*}
The other cases can be proved in the same way or more easily.

Because $h^0(\mathscr{O}(n-m-1) < \ell(Z_2) < h^0(\mathscr{O}(n-m))$,  we have $H^0(\mathscr{I}_{Z_2}(n-m-k))=0$ for all positive integer $k$ and general $Z_2$. So, we have $H^0(E(-k)) = H^0(\mathscr{O}(m-k))$. 

In this condition, we have $H^0(E(-k)) = \chi(\mathscr{O}(m-k)) = \frac{(m-k)^2}{2}H^2 + 2 $ and $\op{dim}|kH| = h^0(\mathscr{O}(kH)) -1 = \frac{k^2}{2}H^2 + 1$.
Note that $h^0(E) = \chi(E) 
            = \frac{m^2}{2}H^2+\frac{(n-m)^2}{2}H^2+4-\ell(Z_2)$. Then, we can calculate as follows.
\begin{small}
\begin{align*}
h^0(E) - \{ h^0(E(-k)) + \op{dim}|kH| \} &= \frac{m^2}{2}H^2+\frac{(n-m)^2}{2}H^2-\frac{(m-k)^2}{2}H^2 -\frac{k^2}{2}H^2
+1-\ell(Z_2).
\end{align*}
In addition, we have $\ell(Z_2) < h^0(\mathscr{O}(n-m)) = \frac{(n-m)^2}{2}H^2+2$. So,  
\begin{align*}
	h^0(E) - \{ h^0(E(-k)) + \op{dim}|kH| \}
	 &> H^2k(m-k)-1 > 0   (\because k, m-k >0). 
\end{align*}
\end{small}
\end{proof}
Thus, we get the condition (2) when $2m \neq n$ or $\ell_2 \neq 1$.

\underline{Case (ii)}
Next, we suppose $(\alpha)$ and $(\beta)$ are true.
 In this case, note that $\ell_1 = 0$ because $\chi(v_1) > \chi(v_2)$ and every sheaf $E \in \hn$ is isomorphic to $\mathscr{O}_X(m) \oplus \mathscr{I}_x (m)$ for some $x \in X$. 
We take section $s_1, s_2 \in H^0(\mathscr{O}(m))$ such that $Z(s_1) \cap Z(s_2)$ is a finite set, where $Z(s_i)$ is the zero set of $s_i (i = 1,2)$. If $x \in Z(s_2)$, $s_1 \oplus s_2 \in H^0(\mathscr{O}_X(m) \oplus \mathscr{I}_x (m))$. 
Since $s_1 \oplus s_2 \in \mathscr{O}_X(m) \oplus \mathscr{O}_X(m)$ defines a torsion free quotient $\mathscr{O}_X(m) \oplus \mathscr{O}_X(m) / (s_1 \oplus s_2) \mathscr{O}_X$, $\mathscr{O}_X(m) \oplus \mathscr{I}_x(m) / (s_1 \oplus s_2) \mathscr{O}_X$ is also torsion free.
\end{proof}

\subsection{About semistable components}
We will use the following lemmas to prove the Lemma \ref{lem:semistable}.

 \begin{lemm}[\cite{yoshioka1999irreducibility} Lemma 1.4 or \cite{yoshioka1999some} Lemma 2.1]
  Let $n$ be an odd integer. If the exact sequence
  \begin{small}
 	\[ 0 \rightarrow \mathscr{O} \left( \frac{n-1}{2} \right) \rightarrow E \rightarrow \mathscr{I}_Z\left( \frac{n+1}{2} \right)\rightarrow 0
 	\]
  \end{small}
 	does not split, then $E$ is a $\mu$-stable sheaf, where $\mathscr{I}_Z$ is the ideal sheaf of a finite subscheme $Z$. 
 	\label{lem:exactseq}
 	\end{lemm}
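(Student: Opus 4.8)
The plan is to prove the stronger statement that any non-split extension of $\mathscr{I}_Z\bigl(\tfrac{n+1}{2}\bigr)$ by $\mathscr{O}\bigl(\tfrac{n-1}{2}\bigr)$ is $\mu$-stable, by a direct destabilizing-subsheaf argument. First I would reduce to checking that no rank-$1$ subsheaf of $E$ has slope $\geq \mu(E) = \tfrac{n}{2}H^2$ (here $\mu$-stability for a rank-$2$ sheaf means every rank-$1$ subsheaf has strictly smaller slope, and on a Picard number $1$ surface slopes are determined by the integer coefficient of $H$). So suppose $\mathscr{L} \hookrightarrow E$ is a rank-$1$ subsheaf; after saturating we may assume $\mathscr{L}$ is a line bundle, say $\mathscr{L} = \mathscr{O}(kH)$ with the quotient $E/\mathscr{L}$ torsion free of rank $1$, hence of the form $\mathscr{I}_W\bigl((n-k)H\bigr)$ for some finite subscheme $W$. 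Destabilization would force $k \geq \tfrac{n}{2}$, i.e. (since $n$ is odd) $k \geq \tfrac{n+1}{2}$.

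The key step is then to rule out $k \geq \tfrac{n+1}{2}$. Compose the inclusion $\mathscr{O}(kH) \hookrightarrow E$ with the surjection $E \twoheadrightarrow \mathscr{I}_Z\bigl(\tfrac{n+1}{2}H\bigr)$. If this composite is nonzero it gives a nonzero map $\mathscr{O}(kH) \to \mathscr{I}_Z\bigl(\tfrac{n+1}{2}H\bigr) \hookrightarrow \mathscr{O}\bigl(\tfrac{n+1}{2}H\bigr)$, i.e. a nonzero section of $\mathscr{O}\bigl((\tfrac{n+1}{2}-k)H\bigr)$; since $H$ is the ample generator this forces $k \leq \tfrac{n+1}{2}$, and if $k = \tfrac{n+1}{2}$ the section is a nonzero scalar, so the composite is an isomorphism onto $\mathscr{O}\bigl(\tfrac{n+1}{2}H\bigr)$ — but its image lies in $\mathscr{I}_Z\bigl(\tfrac{n+1}{2}H\bigr)$, which is a proper subsheaf when $Z \neq \emptyset$; when $Z = \emptyset$ this splits the sequence, contradicting the hypothesis. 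If instead the composite $\mathscr{O}(kH)\to \mathscr{I}_Z(\tfrac{n+1}{2}H)$ is zero, then $\mathscr{O}(kH)$ factors through $\mathscr{O}\bigl(\tfrac{n-1}{2}H\bigr)$, giving a nonzero section of $\mathscr{O}\bigl((\tfrac{n-1}{2}-k)H\bigr)$ and hence $k \leq \tfrac{n-1}{2} < \tfrac{n}{2}$, so $\mathscr{L}$ does not destabilize. In all cases we reach a contradiction with $k \geq \tfrac{n+1}{2}$, proving $\mu$-stability.

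I expect the main obstacle to be the bookkeeping around the saturation step and the borderline case $k = \tfrac{n+1}{2}$: one must argue carefully that a rank-$1$ subsheaf of maximal slope can be replaced by its saturation without decreasing the slope, and then handle separately whether the composite to the quotient vanishes. The case $Z = \emptyset$ (where $\mathscr{I}_Z\bigl(\tfrac{n+1}{2}H\bigr) = \mathscr{O}\bigl(\tfrac{n+1}{2}H\bigr)$) is exactly where the non-split hypothesis is used, so it is worth isolating. Everything else is elementary once one uses that on a K3 of Picard number $1$ all line bundles are powers of $\mathscr{O}(H)$, so $\operatorname{Hom}$ spaces between twists of structure and ideal sheaves are controlled purely by the sign of the exponent; I would simply invoke $h^0(\mathscr{O}(jH)) = 0$ for $j < 0$ repeatedly.
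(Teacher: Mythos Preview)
The paper does not supply a proof; this lemma is quoted from Yoshioka. Your argument is the standard one and is essentially right, but there is a real gap in the saturation step: a saturated rank-$1$ subsheaf of a merely torsion-free sheaf on a surface need \emph{not} be a line bundle. Indeed, take the split extension $E=\mathscr{O}\bigl(\tfrac{n-1}{2}\bigr)\oplus\mathscr{I}_Z\bigl(\tfrac{n+1}{2}\bigr)$ with $Z\neq\emptyset$: the summand $\mathscr{I}_Z\bigl(\tfrac{n+1}{2}\bigr)$ is saturated (its quotient is $\mathscr{O}\bigl(\tfrac{n-1}{2}\bigr)$), it destabilises $E$, yet it is not a line bundle. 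Notice that your argument, once $\mathscr{L}=\mathscr{O}(kH)$ is assumed, never invokes the non-split hypothesis in the $Z\neq\emptyset$ branch (for a line bundle the composite to $\mathscr{I}_Z\bigl(\tfrac{n+1}{2}\bigr)$ is automatically zero, and you conclude $k\le\tfrac{n-1}{2}$); so as written it would ``prove'' $\mu$-stability of this split $E$, which is false.

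The repair is short. Write the saturated subsheaf as $\mathscr{L}=\mathscr{I}_W(kH)$; your slope bounds still force $k=\tfrac{n+1}{2}$, and the nonzero composite $\mathscr{I}_W\bigl(\tfrac{n+1}{2}\bigr)\to\mathscr{I}_Z\bigl(\tfrac{n+1}{2}\bigr)$ is a scalar multiple of an inclusion $\mathscr{I}_W\hookrightarrow\mathscr{I}_Z$. Now use saturation: $E/\mathscr{L}$ is torsion free of rank $1$ with $c_1=\tfrac{n-1}{2}H$, and it receives an injection from $\mathscr{O}\bigl(\tfrac{n-1}{2}\bigr)$ with $0$-dimensional cokernel $\mathscr{I}_Z/\mathscr{I}_W$. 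A torsion-free rank-$1$ sheaf admitting an injection from a line bundle of the same $c_1$ must equal that line bundle, so $\mathscr{I}_Z/\mathscr{I}_W=0$, i.e.\ $W=Z$. Then $\mathscr{L}\to E\to\mathscr{I}_Z\bigl(\tfrac{n+1}{2}\bigr)$ is an isomorphism and splits the sequence, which is where the non-split hypothesis enters for every $Z$. Alternatively, run the entire argument on torsion-free rank-$1$ \emph{quotients} $E\twoheadrightarrow\mathscr{I}_V(jH)$, where this subtlety does not arise.
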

 	
 \begin{lemm}[\cite{yoshioka1999irreducibility} Proposition 0.5 and Section 3.3]Let $v:=(2, nH, \frac{n^2H^2}{2}-N+2)$. We assume that $v$ is primitive, $v \neq (2, 0, -1)$ and “ $v \neq (2,nH, \frac{n^2H^2}{4}-1)$ and $n$ is even”.  
 Then, there exists a stable locally free sheaf with Mukai vector $v$.
 	\label{lem:vctbdl}
 \end{lemm}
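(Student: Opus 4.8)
The plan is to reduce the existence statement to a dimension estimate on the non-locally-free locus. First I would twist by a power of $\mathscr{O}_X(H)$; this is legitimate since $\op{Pic}(X)=\mathbb{Z}H$, and twisting preserves $\mu$- and Gieseker-(semi)stability, local freeness, and the Mukai pairing, while the excluded even vectors $(2,nH,\tfrac{n^2H^2}{4}-1)$ are precisely the twists of $(2,0,-1)$; so afterwards we may assume $[v]_1\in\{0,H\}$ and that $(2,0,-1)$ is the only forbidden vector. We may also assume $\la v,v\ra\ge -2$, since otherwise $\mathscr{M}^{\op{ss}}(v)=\emptyset$ by \cite{yoshioka1999irreducibility} and no stable sheaf of any kind exists. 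By Lemma \ref{lemm:irr,yoshioka} and Remark \ref{lem:irr 2} the stack $\mathscr{M}^{\op{ss}}(v)$ is then nonempty, irreducible, of the dimension recorded in Lemma \ref{lemm:dimension formula}, with generic point stable; since local freeness is an open condition, it suffices to show that the closed substack of non-locally-free sheaves has strictly smaller dimension.

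To bound that substack I would pass to reflexive hulls. For torsion-free $E$ on the smooth surface $X$, the sheaf $E^{\vee\vee}$ is locally free, has the same slope (hence is still $\mu$-semistable), sits in an exact sequence $0\to E\to E^{\vee\vee}\to Q\to 0$ with $Q$ of finite length $\ell\ge 1$, satisfies $v(E^{\vee\vee})=v+(0,0,\ell)$ and $\la v+(0,0,\ell),v+(0,0,\ell)\ra=\la v,v\ra-4\ell$, and $E$ is recovered as the kernel of a length-$\ell$ quotient of $E^{\vee\vee}$. Since the Quot-scheme of length-$\ell$ quotients of a rank-$2$ bundle on a surface is $3\ell$-dimensional, the non-locally-free locus of $\mathscr{M}^{\op{ss}}(v)$ is covered, over the strata $\ell\ge 1$, by relative Quot-schemes over the moduli of locally free $\mu$-semistable sheaves with Mukai vector $v+(0,0,\ell)$, so its dimension is $\le\max_{\ell\ge 1}(d_\ell+3\ell)$, where $d_\ell$ denotes the dimension of that moduli. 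Bounding $d_\ell$ is the crux. When $v+(0,0,\ell)$ has odd first Chern class — automatic in the case $[v]_1=H$ — a rank-$2$ $\mu$-semistable sheaf is automatically Gieseker-stable, so $d_\ell\le\dim\mathscr{M}^{\op{ss}}(v+(0,0,\ell))$ and Lemma \ref{lemm:dimension formula} applies directly; when $[v]_1=0$ one must additionally bound the locus of strictly $\mu$-semistable locally free sheaves, which on $X$ are exactly the non-split extensions $0\to\mathscr{O}_X\to F\to\mathscr{I}_W\to 0$ (only $\mathscr{O}_X$ can destabilize at slope $0$), whose moduli dimension is controlled by $\ell(W)$.

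Feeding the dimension formula(s) into $\max_{\ell\ge 1}(d_\ell+3\ell)$ and simplifying, one checks that it is $<\la v,v\ra+1=\dim\mathscr{M}^{\op{ss}}(v)$ for every $\ell\ge 1$, with a single failure: $v=(2,0,-1)$, where the low-$c_2$ arithmetic forces the estimate to degenerate to an equality — consistent with the classical fact that no stable rank-$2$ sheaf of trivial determinant and $c_2=3$ on a K3 surface is locally free. Away from this vector the estimate is strict, so the generic, hence some, stable sheaf with Mukai vector $v$ is locally free. For the handful of small-$\la v,v\ra$ cases where the count is tightest, it is reassuring (and constructive) to also produce the bundle by hand: when $[v]_1=H$, a non-split extension $0\to\mathscr{O}_X(\tfrac{n-1}{2}H)\to E\to\mathscr{I}_Z(\tfrac{n+1}{2}H)\to 0$ with $\ell(Z)$ dictated by $v$ is $\mu$-stable by Lemma \ref{lem:exactseq}, and is locally free once the pair $(Z,\mathscr{O}_X(H))$ has the Cayley--Bacharach property (using $\omega_X\cong\mathscr{O}_X$), which is arranged by placing $Z$ on a curve in $|H|$; when $[v]_1=0$ one instead uses $0\to\mathscr{O}_X(-H)\to E\to\mathscr{I}_Z(H)\to 0$ with $H^0(\mathscr{I}_Z(H))=0$ to force $\mu$-stability, together with the corresponding Cayley--Bacharach condition.

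The step I expect to be the main obstacle is the estimate for $d_\ell$ in the $[v]_1=0$ case and the bookkeeping that follows: one has to account for locally free sheaves that are $\mu$-semistable but not Gieseker-semistable (which Lemma \ref{lemm:dimension formula} does not see), keep track of when $v+(0,0,\ell)$ becomes imprimitive so as to invoke the correct branch of the dimension formula, and verify strictness uniformly in $\ell\ge 1$ — and it is precisely this analysis that singles out $(2,0,-1)$ (equivalently $(2,nH,\tfrac{n^2H^2}{4}-1)$ with $n$ even) as the unique exception. Certifying local freeness of the explicit extension models in the remaining small cases without leaving the $\mu$-stable locus is the secondary technical point.
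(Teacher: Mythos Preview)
This lemma is not proved in the present paper at all; it is quoted verbatim from Yoshioka \cite{yoshioka1999irreducibility} (Proposition~0.5 and \S3.3) and used as a black box in the proof of Lemma~\ref{lem:semistable}. So there is no ``paper's own proof'' to compare your proposal against.

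For what it is worth, your outline is essentially the argument in the cited reference: twist so that $c_1\in\{0,H\}$, use irreducibility of $\mathscr{M}^{\text{ss}}(v)$ and openness of local freeness to reduce to a dimension bound on the non--locally--free locus, stratify that locus by the colength $\ell$ of $E\hookrightarrow E^{\vee\vee}$, and bound each stratum via the $3\ell$--dimensional Quot scheme of length-$\ell$ quotients of a rank-$2$ bundle together with $\la v+(0,0,\ell),\,v+(0,0,\ell)\ra=\la v,v\ra-4\ell$. You also correctly isolate the real subtlety --- that for $c_1=0$ the hull $E^{\vee\vee}$ is only $\mu$-semistable, so one must separately bound the locus of locally free extensions of $\mathscr{I}_W$ by $\mathscr{O}_X$ --- and you correctly pinpoint $(2,0,-1)$ (hence its twists $(2,nH,\tfrac{n^2H^2}{4}-1)$ with $n$ even) as the case where the estimate degenerates. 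Two minor slips that do not affect the count: the hypothesis $\la v,v\ra\ge -2$ is implicit in the statement (otherwise the existence claim is simply false, not vacuous), and your description of the strictly $\mu$-semistable locally free sheaves as ``exactly the non-split extensions $0\to\mathscr{O}_X\to F\to\mathscr{I}_W\to 0$'' is a bit loose, since the split sheaf $\mathscr{O}_X^{\oplus 2}$ belongs there too and, conversely, the non-split extension with $\ell(W)=1$ fails Cayley--Bacharach and is not locally free.
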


\begin{rmk}
\label{172605_3Dec21}
 In the Lemma \ref{lem:vctbdl}, if $n$ is odd, then any stable sheaf is $\mu$-stable sheaf. 
However, if $n$:even, a stable sheaf is not necessarily a $\mu$-stable sheaf.
\end{rmk}

\begin{lem}
	Let $E$ be a general member of the stack $\mathscr{M}^{\text{ss}}(v)$. Then, the conditions (1) and (2) are equal to the conditions $\chi(E) > 0$ and  “$H^2 \neq 2$ or $ v \neq (2,3H, 5)$”.
	\label{lem:semistable}
\end{lem}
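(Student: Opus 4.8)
The plan is to analyze a general member $E$ of $\mathscr{M}^{\text{ss}}(v)$ through the extension structure coming from a global section, exactly as in the non-semistable case, and to pin down precisely when conditions $(1)$ and $(2)$ can fail. First I would record the easy implication: if $(1)$ and $(2)$ hold for a general $E$, then $\chi(E) = h^0(E) - h^1(E) + h^2(E) = h^0(E) > 0$ since $E$ has a section, $H^1(E) = H^2(E) = 0$; and I would observe that the excluded case $H^2 = 2$, $v = (2,3H,5)$ must be treated by hand (this is the content of Claim \ref{163847_3Dec21} referenced in the remarks), showing there that the general semistable sheaf does \emph{not} admit a section with torsion-free quotient. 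So the substance is the converse: assuming $\chi(E) > 0$ and we are not in the exceptional case, produce a general semistable $E$ satisfying $(1)$ and $(2)$.

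The key steps, in order, are as follows. \textbf{Step 1: reduce to a single ``model'' sheaf via irreducibility.} Since $\langle v, v\rangle \geq -2$ and the relevant stack is irreducible (Lemma \ref{lemm:irr,yoshioka}, Remark \ref{lem:irr 2}), conditions $(1)$ and $(2)$ being open, it suffices to exhibit \emph{one} semistable $E$ with $v(E) = v$ satisfying them; a general member then also does. \textbf{Step 2: construct $E$ as a non-split extension.} Writing $v = (2, nH, \tfrac{n^2H^2}{2} - N + 2)$, split off a section: I would build $E$ from a non-split extension $0 \to \mathscr{O}_X(aH) \to E \to \mathscr{I}_Z((n-a)H) \to 0$ with $a = \lfloor n/2 \rfloor$ (so the two Mukai sub/quotient vectors are as balanced as possible), choosing $Z$ a general length-$\ell$ subscheme with $\ell$ determined by $c_2(v)$. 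When $n$ is odd this is exactly the setup of Lemma \ref{lem:exactseq}, which gives $\mu$-stability of $E$ for free once the extension is non-split; when $n$ is even I would instead invoke Lemma \ref{lem:vctbdl} (and Remark \ref{172605_3Dec21}) to get a stable — though possibly not $\mu$-stable — locally free sheaf with Mukai vector $v$, which is precisely where the exceptional case $v = (2, nH, \tfrac{n^2H^2}{4} - 1)$, i.e.\ $\langle v_0, v_0\rangle = -2$, intervenes; for $H^2 = 2$ this forces $v = (2, 3H, 5)$ up to twist, the excluded case. \textbf{Step 3: verify $(1)$.} For the extension-built $E$, $H^2(E) = 0$ since $H^2(\mathscr{O}_X(aH)) = H^2(\mathscr{I}_Z((n-a)H)) = 0$, and $H^1(E) = 0$ follows by the same Castelnuovo–Mumford-type / induction-on-$\ell$ argument used in Lemma \ref{lem:notsemistable} together with the numerical hypothesis $\chi(E) > 0$ and $\ell < h^0(\mathscr{O}((n-a)H))$ (which $\chi(v) > 0$ guarantees). \textbf{Step 4: verify $(2)$.} Use the Cayley–Bacharach property for the pair $(Z, \mathscr{O}_X((n-2a)H))$ (cf.\ \cite[Thm 5.1.1]{huybrechts2010geometry}) to see that $E$ is locally free, or at worst that a general section vanishes on a finite set, hence $E/s\mathscr{O}_X$ is torsion free; when $E$ comes from Lemma \ref{lem:vctbdl} it is already locally free and one argues generic global generation as in the non-semistable case to pick a good section.

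The main obstacle I expect is \textbf{Step 2 in the even case together with the bookkeeping that isolates exactly the one exceptional pair} $(H^2, v) = (2, (2,3H,5))$. Ruling out $v = (2,0,-1)$ is easy (there $\chi \leq 0$), but the condition ``$v \neq (2, nH, \tfrac{n^2H^2}{4} - 1)$ with $n$ even'' in Lemma \ref{lem:vctbdl} is a genuine gap: there the general semistable sheaf is strictly semistable / fails to be $\mu$-stable, and one must check directly — for $H^2 = 2$, forcing $n = 3$ is impossible since $n$ is even, so actually one should re-derive which $(H^2, n)$ give $\langle v_0,v_0\rangle = -2$ with $v$ primitive and see that the obstruction to $(2)$ survives only for $H^2 = 2$, $v = (2,3H,5)$ after reducing mod twists by $\mathscr{O}_X(H)$; this requires a short but careful case analysis of $h^0(\mathscr{O}_X(kH))$ for small $k$ when $H^2 = 2$. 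All other numerical cases reduce, via Step 1 and the extension construction, to the already-established arguments of Lemma \ref{lem:notsemistable} and Lemma \ref{lem:exactseq}.
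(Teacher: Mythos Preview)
There is a genuine gap in Step~3, and it is exactly where the exceptional case hides. For $n$ odd with $a=\tfrac{n-1}{2}$, the extension class lives in $\op{Ext}^1(\mathscr{I}_Z(\tfrac{n+1}{2}),\mathscr{O}(\tfrac{n-1}{2}))\simeq H^1(\mathscr{I}_Z(1))^\vee$ by Serre duality on the K3. So a \emph{non-split} extension exists only when $Z\in W^0_{\ell}(H)$; you cannot take $Z$ general in $\op{Hilb}^\ell(X)$. Consequently your assertion that ``$\chi(v)>0$ guarantees $\ell<h^0(\mathscr{O}((n-a)H))$, hence $H^1(\mathscr{I}_Z(\tfrac{n+1}{2}))=0$ for general $Z$'' is beside the point (and in fact numerically false: for $n=3$, $H^2=2$ one has $\chi>0$ for $N\le 12$ but $\ell<h^0(\mathscr{O}(2H))$ only for $N\le 9$). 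The real question is whether $h^1(\mathscr{I}_{Z}(\tfrac{n+1}{2}))=0$ for $Z$ general \emph{in $W^0_\ell(H)$}, and this is precisely Claim~\ref{163847_3Dec21}. The paper handles it by twisting down to $E'=E(-\tfrac{n-1}{2})$, observing that $\mathscr{M}^{\text{ss}}(v)\simeq\mathscr{M}^{\text{ss}}(v')$ forces $h^1(\mathscr{I}_Z(1))=1$ for the generic $Z$, and then comparing $W^0_\ell(H)$ with $W^0_\ell(2H)$ via the already-established Lemma~\ref{lem:notsemistable} and a dimension count; separately, for $N$ large enough the paper bypasses all of this by noting that the relevant HN stratum lies in $\overline{\sss}$, so the non-semistable argument applies verbatim. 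Your plan collapses these two regimes and skips the $W^0_\ell(H)$ versus $W^0_\ell(2H)$ comparison, which is the substantive step.

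Your identification of the exceptional case is also off. The vector $(2,3H,5)$ has $n=3$, which is odd, so it does not arise from the even-$n$ exclusion in Lemma~\ref{lem:vctbdl}; indeed $\tfrac{n^2H^2}{4}-1$ is not even an integer here. The obstruction is to condition~(1), not~(2): when $H^2=2$ one computes $\ell(Z)=2$, and the paper shows $W^0_2(H)=W^0_2(2H)$, so every $Z\in W^0_2(H)$ has $h^1(\mathscr{I}_Z(2))\neq 0$, whence $H^1(E)\neq 0$ for \emph{every} $E\in\sss$. Your Step~4 bookkeeping therefore looks in the wrong place and would not isolate $(H^2,v)=(2,(2,3H,5))$.
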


 \begin{proof}
 If (1) and (2) satisfy, we have $H^1(E)=0$ and $H^0(E) \neq 0$. Therefore, we have $\chi(E)>0$. 

We will prove Lemma \ref{lem:semistable} only when $n$ is an odd integer. 
We can also prove this lemma in the same way when $n$ is even. Note, for a general $E$, we have $H^2(E) = 0$ by semistability. 

In the following we assume only $\chi(v) > 0$ (we do not assume the latter condition).

\subsection*{When $N > \frac{n^2+1}{4}H^2 +3 $ with odd $n$
}
\subsubsection*{About the conditions (1) and (2)}
First, we assume that $N > \frac{n^2+1}{4}H^2 +3$. This is equivalent to the condition that the closure of the stacks of Harder-Narasimhan filtrations whose general sheaf is an extension
\begin{small}
\[
0 \rightarrow \mathscr{I}_{Z_1}\left( \frac{n+1}{2} \right)  \rightarrow E \rightarrow \mathscr{I}_{Z_2} \left(\frac{n-1}{2} \right) \rightarrow 0
\]
\end{small}
is contained in the closure of $\sss$. Then, we can show that some $E$ in the closure of $\sss$ have no higher cohomology in the same way as in Lemma \ref{lem:notsemistable}. Moreover, we can prove a general $E$ have a global section which give a torsion free quotient.

\subsection*{When  $\frac{n^2+1}{4}H^2 +3 \geq N$ with odd  $n$}

\subsubsection*{About the condition (1)}
Next we assume that $\frac{n^2+1}{4}H^2 +3 \geq N$. From Lemma \ref{lem:vctbdl} and \ref{172605_3Dec21}, there exists a $\mu$-stable sheaf $E$. We next consider $E(-\frac{n-1}{2})$. Let $E':= E(-\frac{n-1}{2})$ and $v' := v(E')$. Then, $E'$ fits into the following exact sequence
\begin{small}
 \[ 0 \rightarrow \mathscr{O} \rightarrow E' \rightarrow \mathscr{I}_Z(1) \rightarrow 0\]
\end{small}
 , where $Z$ is a finite subscheme of $X$.  
Indeed, we have $\op{hom}({E'}^\vee, \mathscr{O}) = \op{ext}^2(\mathscr{O}, {E'}^\vee) = h^2({E'}^\vee) = h^0(E') \neq 0$ and $\op{hom}(E', \mathscr{O}) = \op{ext}^2(\mathscr{O}, E') = h^2(E') = 0$ because $\chi(E') > 0 \text{ and }, E'$ is also $\mu$-stable. So, we have the above exact sequence by using these.
 
Because any non-split extension of $\mathscr{I}_Z(1)$ by $\mathscr{O}$ is a $\mu$-stable sheaf from Lemma \ref{lem:exactseq}, the unique irreducible component of $W^0_{\ell(Z)}(H)$ corresponds to $\mathscr{M}^{\text{ss}}(v')$.
Thus, $h^1(\mathscr{I}_Z(1)) = 1 $, for a general $E$ since $\mathscr{M}^{\text{ss}}(v) \simeq \mathscr{M}^{\text{ss}}(v')$.

Next, we see the following claim holds.
\begin{cl}
  $h^1(\mathscr{I}_{Z'}(2)) = 0$ for general $Z' \in W^0_{\ell(Z)}(H)$ except for the case ``$H^2=\ell(Z)=2$ '' .
 In the case `` $H^2=\ell(Z)=2$ '', $W^0_{\ell(Z)}(H)= W^0_{\ell(Z)}(2H)$ and $h^1(\mathscr{I}_{Z'}(3)) = 0$ for general $Z' \in W^0_{\ell(Z)}(H)$.
\label{163847_3Dec21}
\end{cl}

If  the claim holds, this induces condition (2) except for the case ``$H^2 = 2, v= \la 2 , 3H , 5 \ra$ '' and condition (2) never hold in this exceptional case.
Before proving the claim, we show this.

First, note that ``$ n =3, H^2 = \ell(Z) = 2$'' $ \Rightarrow v= \la 2, 3H, 5 \ra$. So, if the claim holds, any $E \in \sss$ fits into the following exact sequence
\[
 0 \rightarrow \mathscr{O}_X(1) \rightarrow E \rightarrow \mathscr{I}_{Z'}(2) \rightarrow 0
\]
, where $Z' \in W^0_2(H)$. 
Here, $W^0_2(H) = W^0_2(2H)$ from the claim and we have $h^1(\mathscr{I}_{Z'}(2)) \neq 0$ for any $Z' \in W^0_2(H)$. 
Thus, $h^1 (E) \neq 0$ from the long exact sequence of cohomology obtained from above.
This shows the condition (2) never hold in the case `` $H^2 =2, V = \la 2, 3H 5 \ra$.
On the other hand, except for the case, any $E \in \sss$ fit into the follwing exact sequence
\[
 0 \rightarrow \mathscr{O}_X\left( \frac{n-1}{2} \right) \rightarrow E \rightarrow \mathscr{I}_{Z'} \left( \frac{n+1}{2} \right) \rightarrow 0 
\] 
, where $Z' \in W^0_{\ell(Z)}(H)$.
Here, $h^1(\mathscr{I}_{Z'}(2)) = 0$ for general $Z' \in W^0_{\ell(Z)}(H)$ and $h^1(\mathscr{I}_{Z'}(k)) \geq h^1(\mathscr{I}_{Z'}(k+1))$ for all $k$.
This induces the condition (2) hold in general $E \in \sss$.


\begin{proof}[Proof of Claim \ref{163847_3Dec21}]

First, note that we have $\frac{H^2}{2}+3 \geq \ell(Z)$ because $\chi(E') > 0$. 
If $\frac{H^2}{2}+3 = \ell(Z)$, then $\text{Hilb}^{\ell(Z)}(X) = W^0_{\ell(Z)}(H)$. 
For general $Z' \in \text{Hilb}^{\ell(Z)}(X)$, $h^1(\mathscr{I}_{Z'}(2)) = 0$ because $h^0(\mathscr{O}_X(2)) \geq \ell (Z) $. If $\frac{H^2}{2}+2 \geq \ell(Z)$, then $\text{Hilb}^{\ell(Z)}(X) \neq W^0_{\ell(Z)}(H)$. 

Let $v'' := (2, 2H, 2H^2-\ell(Z)+2)$. 
We divide the rest of the proof into 4 steps.
\begin{enumerate}
\item
We have $\mathscr{M}^{\text{ss}}(v'') = \emptyset$ unless “$ H^2 = \ell(Z) =4$”  or “$H^2 = \ell(Z) = 2$” by \cite[Cor 0.3]{yoshioka1999irreducibility}. 
Moreover, there is not an irreducible component of $\mathscr{M}^{\text{tf}}(v'')$ whose general member is a HN-filtration satisfying the conditions (a), (b) and (c) of Lemma \ref{lem:notsemistable} unless “$H^2 = 2$ and $\ell(Z) = 3$”. 
So, from Lemma \ref{lem:notsemistable},  $W^0_{\ell(Z)}(2H) = \emptyset$ except the three cases.  
Thus, $h^1(\mathscr{I}_{Z'}(2)) = 0$ for general $Z' \in W^0_{\ell(Z)}(H)$ except for the three cases.

\item
When ``$H^2 = \ell(Z) = 4$'', $W^0_{\ell(Z)}(2H)$ may not be empty. 
If $W^0_{\ell(Z)}(2H)$ is not empty, the unique irreducible component corresponds to $\mathscr{M}^{\text{ss}}(v'')$. 
Moreover, we can calculate the dimensions of $W^0_{\ell(Z)}(H)$ and $W^0_{\ell(Z)}(2H)$ by using \ref{lemm:dimension formula} and the formula \ref{dimfom} and get $\op{dim} W^0_{\ell(Z)}(H) = 7$ and $\op{dim} W^0_{\ell(Z)}(2H) = 4$.
This means that $W^0_{\ell(Z)}(H) \supsetneqq W^0_{\ell(Z)}(2H)$ and we have the claim.

\item
  When ``$H^2 = \ell(Z) = 2$'', $W^0_{\ell(Z)}(2H)\neq \emptyset$ because the unique point of $\mathscr{M}^{\text{ss}}(v'')$ is $\mathscr{O}_X(H)^{\oplus 2}$. 
And, we also have $\op{dim} W^0_{\ell(Z)}(H) =\op{dim} W^0_{\ell(Z)}(2H) = 2$. 
So, we have $W^0_{\ell(Z)}(H) = W^0_{\ell(Z)}(2H)$. 
However, $W^0_{\ell(Z)}(3H) = \emptyset$ as above. 

\item
In the same way as in Step 2, we get the claim when ``$H^2 = 2$ and $\ell(Z) = 3$''. 

\end{enumerate}

Therefore, we get $h^1(\mathscr{I}_{Z'}(2)) = 0$ for a general $Z' \in W^0_{\ell(Z)}(H)$ when $H^2 \neq 2$ or $\ell(Z) \neq 2$ and $h^1(\mathscr{I}_{Z'}(3)) = 0$ for a general $Z' \in W^0_{\ell(Z)}(H)$ when $H^2 = \ell(Z) = 2$.

\end{proof}

\begin{rmk}
We will explain how we calculate $\op{dim} W^0_{\ell(Z)}(H)$ when `` $H^2 = \ell(Z) =4$'' here (similarly, we can also do when “$H^2 = \ell(Z) = 2$” and “$H^2 = 2, \ell(Z) = 3$”).
 It is sufficient to calculate $\op{dim}\mathscr{M}^{\text{ss}}(v')$, $\op{dim}\mathscr{M}^{\text{ss}}(v'')$, $h^0(E')$ and $h^0(E'')$, where $E''$ is  a general member of $\mathscr{M}^{\text{ss}}(v'')$ from the formula \ref{dimfom}. We can calculate $\op{dim}\mathscr{M}^{\text{ss}}(v')$ and  $\op{dim}\mathscr{M}^{\text{ss}}(v'')$ by using \ref{lemm:dimension formula}, $h^0(E')$ by using the above exact sequence and $h^0(E'')$ by the fact that the unique member of $\mathscr{M}^{\text{ss}}(v'')$ is $\mathscr{O}_X(H)^{\oplus 2}$ (in detail, see \cite{mukai1984moduli}, \cite{kaledin2006singular} et al.).	
\end{rmk}

\subsubsection*{About the condition (2)}
Next,we prove the condition (2). It is enough to prove $h^0(E(-k)) + \op{dim}|kH| < h^0(E)$ as in the same way of the proof of Lemma \ref{lem:notsemistable}   because a general sheaf in $\sss$ is a vector bundle by Lemma \ref{lem:vctbdl}. Note that we have the following exact sequence for a general $E$, 
\begin{small}
	\[ 0 \rightarrow \mathscr{O}\left(\frac{n-1}{2}\right) \rightarrow E \rightarrow \mathscr{I}_Z\left(\frac{n+1}{2}\right) \rightarrow 0\]
\end{small}
, where $Z$ is a finite subscheme of $X$ and $h^1(\mathscr{I}_Z\left(\frac{n+1}{2}\right)) = 0$. So, for $\frac{n-1}{2} \geq k > 0$,
\begin{small}
\begin{align*}
&h^0(E) - \{ h^0(E(-k)) + \op{dim}|kH| \}\\
&\geq (h^0(\mathscr{I}_Z(\frac{n+1}{2})) + \chi(\mathscr{O}(\frac{n-1}{2}))- (h^0(\mathscr{I}_Z(\frac{n+1}{2}-k)) + \chi(\mathscr{O}(\frac{n-1}{2}-k)) + \op{dim}|kH|) \\
&=kH^2(\frac{n-1}{2}-k)-1+h^0(\mathscr{I}_Z(\frac{n+1}{2}))-h^0(\mathscr{I}_Z(\frac{n+1}{2}-k)) > 0 .
\end{align*}
(In the case of $k = \frac{n-1}{2}$, we use $h^0(\mathscr{I}_Z(\frac{n+1}{2})) = \frac{(n+1)^2}{8}H^2 -\ell(Z) +2$ and $ h^0(\mathscr{I}_Z(1)) = \frac{1}{2}H^2 -\ell(Z) +3$ ) 
\end{small}
\begin{rmk}(In the case $n$ is even)
	When $n$ is even, we can prove that a general sheaf $E \in \sss$ have a section defining a torsion free quotient as in the same way as in the proof above except $v = (2, nH, \frac{n^2H^2}{2})$ or $(2,nH, \frac{n^2H^2}{4}-1)$.
	 In these case, any sheaf of $\sss$ is not vector bundle and the closure of $\sss$ dose not contain any stacks of HN-filtration. However, we can prove the condition (1), (2) in the same way of the proof of Lemma \ref{lem:notsemistable}. 
	 In the former case, note that any semistable sheaf is isomorphic to a sheaf of the form $\mathscr{I}_x\left(\frac{n}{2}\right) \oplus \mathscr{I}_y\left(\frac{n}{2}\right)(x, y \in X)$.
	  In the latter case, note that a general quotient $\mathscr{O}\left(\frac{n}{2}\right) \rightarrow \oplus_{i=1}^3 k(x_i) (x_i \in X) $ and any non split extension $0 \rightarrow \mathscr{I}_{\{y_1, y_2\}}\left(\frac{n}{2}\right) \rightarrow E \rightarrow \mathscr{I}_{y_3 }\left(\frac{n}{2}\right) \rightarrow 0(y_j \in X)$ is a semistable sheaf with the Mukai vector $v = (2,nH, \frac{n^2H^2}{4}-1)$ when $n$ is even (cf. \cite[Prop 3.4]{yoshioka1999irreducibility}).
\end{rmk}
 \end{proof}
\begin{small}
\subsection*{Acknowledgements}
	The author would like to thank his advisor Professor Hajime Kaji for helpful comments, warm encouragement and support. He is also grateful to Professors Yasunari Nagai and Ryo Ohkawa for helpful comments and discussions. Finally, he thank the members of the algebraic geometry laboratory of Waseda University. 
\end{small}

\bibliographystyle{amsalpha}
\bibliography{mizuno_paper}

\end{document}